\documentclass
[production,12pt]{jsg}
\usepackage{latexsym,amsmath,amsfonts,amscd,amssymb,enumerate,url}
\newtheorem{thm}{Theorem}
\newtheorem{prop}[thm]{Proposition}
\newtheorem{lemma}[thm]{Lemma}
\newtheorem{claim}{Claim}

\newtheorem{rmk}[thm]{Remark}
\newtheorem*{Rmk}{Remark}
\newtheorem*{Rmks}{Remarks}

\newtheorem{pb}{Problem}

\newenvironment{pf}[1][Proof.]{\noindent \textbf{#1:} }{}
\newenvironment{enui}{\begin{enumerate}[(i)]}{\end{enumerate}}

\newcommand\Om{\Omega}
\newcommand\om{\omega}
\newcommand\sub{\subseteq}

\newcommand{\N}{\mathbb{N}}
\newcommand\NN{\nu}
\newcommand{\Z}{\mathbb{Z}}
\newcommand{\R}{\mathbb{R}}

\newcommand\F{{\mathcal{F}}}
\newcommand\x{\times}
\newcommand\disj{\bigsqcup}

\newcommand\hol{{\operatorname{hol}}}

\newcommand{\BAR}[1]{{\overline{#1}}}
\newcommand\wt[1]{{\widetilde{#1}}}

\newcommand\OO{\operatorname{O}}
\newcommand\UU{\mathcal{U}}
\newcommand\VV{\mathcal{V}}

\newcommand\wo{\setminus}
\newcommand\Int[1]{\stackrel{\circ}{#1}}
\newcommand\pr{\operatorname{pr}}
\newcommand{\hhat}[1]{\widehat{#1}}

\newcommand\nn{{\nonumber}}

\newcommand\eps{\varepsilon}

\newcommand\id{{\operatorname{id}}}

\renewcommand\phi{\varphi}
\newcommand\codim{{\operatorname{codim}}}
\newcommand\Wlog{w.l.o.g.~}
\newcommand\wrt{w.r.t.~}
\newcommand\Fix{{\operatorname{Fix}}}

\newcommand\dd{\partial}
\newcommand\cont{\supseteq}
\newcommand\dom{\operatorname{dom}}
\newcommand\Ham{\operatorname{Ham}}
\newcommand\cl{\operatorname{cl}}
\newcommand\pt{\operatorname{pt}}
\newcommand\then{\Longrightarrow}
\newcommand\follows{\Longleftarrow}
\newcommand\const{\equiv}


\title[Leafwise fixed points]{Leafwise fixed points for $C^0$-small Hamiltonian flows}

\author[Fabian Ziltener]{Fabian Ziltener\\
Utrecht University, Mathematics Department, Budapestlaan 6, 3584CD Utrecht, The Netherlands\\
e-mail: f.ziltener@uu.nl
}

\begin{document}

\maketitle

\begin{abstract}
Consider a closed coisotropic submanifold $N$ of a symplectic manifold $(M,\om)$ and a Hamiltonian diffeomorphism $\phi$ on $M$. The main result of this article states that $\phi$ has at least the cup-length of $N$ many leafwise fixed points \wrt $N$, provided that it is the time-1-map of a global Hamiltonian flow whose restriction to $N$ stays $C^0$-close to the inclusion $N\to M$. If $(\phi,N)$ is suitably nondegenerate then the number of these points is bounded below by the sum of the Betti-numbers of $N$. The nondegeneracy condition is generically satisfied.

This appears to be the first leafwise fixed point result in which neither $\phi\big|_N$ is assumed to be $C^1$-close to the inclusion $N\to M$, nor $N$ to be of contact type or regular (i.e., ``fibering''). It is optimal in the sense that the $C^0$-condition on $\phi$ cannot be replaced by the assumption that $\phi$ is Hofer-small. 
\end{abstract}

\tableofcontents

\section{Introduction and main result}\label{sec:}
Consider a symplectic manifold $(M,\om)$ and a coisotropic submanifold $N\sub M$. This means that for every $x\in N$ the symplectic complement of $T_xN$,
\[T_xN^\om:=\big\{v\in T_xM,\big|\,\om(v,w)=0,\,\forall w\in T_xN\big\},\]
is contained in $T_xN$. It follows that $TN^\om=\disj_{x\in N}T_xN^\om$ is an involutive distribution on $N$. By Frobenius' Theorem such a distribution gives rise to a foliation on $N$. The leaves of this foliation are called isotropic leaves.

Let $S\sub M$ be a set containing $N$ and $\phi:S\to M$. A \emph{leafwise fixed point for $\phi$} is a point $x\in N$ for which $\phi(x)$ lies in the isotropic leaf through $x$. We denote by $\Fix(\phi,N)$ the set of such points.

For every function $F\in C^\infty(M,\R)$ we denote by $X_F$ its Hamiltonian vector field. It is the unique vector field on $M$ satisfying
\[dF=\om(X_F,\cdot).\]
For every function $H\in C^\infty\big([0,1]\x M,\R\big)$ we denote $H_t:=H(t,\cdot)$ and by $(\phi_H^t)_{t\in[0,1]}$ the Hamiltonian flow of $H$, i.e., the flow of the time-dependent vector field $(X_{H_t})_{t\in[0,1]}$. We denote by $\dom(\phi_H^t)\sub M$ the domain of $\phi_H^t$. By a global Hamiltonian flow on $M$ we mean a family $(\phi_H^t)_{t\in[0,1]}$ arising this way, such that $\dom(\phi_H^t)=M$ and $\phi_H^t$ is surjective, for every $t\in[0,1]$.%
\footnote{In this case $\phi_H^t$ is a diffeomorphism of $M$ for every $t$. As an example this happens if $H$ has compact support.}%
 By a Hamiltonian diffeomorphism on $M$ we mean the time-1-map of a global Hamiltonian flow on $M$. We denote by $\Ham(M,\om)$ the set of Hamiltonian diffeomorphisms on $M$.

Let $\phi\in\Ham(M,\om)$. A fundamental problem in symplectic geometry is the following:
\begin{pb}\label{pb:leafwise} Find conditions under which $\Fix(\phi,N)$ is nonempty and find lower bounds on its cardinality.
\end{pb}
In the extreme case $N=M$ the set $\Fix(\phi,N)$ consists of the usual fixed points of $\phi$. Such points correspond to periodic orbits of Hamiltonian systems. Starting with a famous conjecture by V.~I.~Arnold \cite{Ar}, the above problem has been extensively studied in this case. It has led to Hamiltonian Floer homology.

On the opposite extreme, consider the case in which $N$ is Lagrangian, i.e., has half the dimension of $M$. Then 
\[\Fix(\phi,N)=N\cap\phi^{-1}(N)\]
(provided that $N$ is connected). In this situation, based on seminal work by A.~Floer \cite{FlLag}, the above problem has given rise to Lagrangian Floer homology and the Fukaya category. 

As an intermediate case, coisotropic submanifolds of codimension 1 arise in classical mechanics as energy level sets for an autonomous Hamiltonian. If $\phi$ is the time-one flow of a time-dependent perturbation of the Hamiltonian, then $\Fix(\phi,N)$ corresponds to the set of points on the level set whose trajectory is changed only by a phase shift, under the perturbation.

In order to solve Problem \ref{pb:leafwise}, it is necessary to make some assumptions on $N$ and $\phi$. Firstly, one should assume that $N$ is closed,%
\footnote{This means that $N$ is compact and has no boundary.}%
since e.g.~the interval $(0,1)\x\{0\}$ is a Lagrangian submanifold of $\R^2$ that can be displaced by a Hamiltonian diffeomorphism that is $C^\infty$-close to the identity. In general one also needs that $\phi$ is close to the identity in a suitable sense, since every compact subset of the product of $\R^2$ with a symplectic manifold can be displaced by some Hamiltonian diffeomorphism.

So far solutions to Problem \ref{pb:leafwise} have been found under restrictive assumptions on $\phi$ or $N$, see \cite{Mo,Ban,EH,HoTop,Gi,Dr,Gu,ZiLeafwise,ZiMaslov,AFRab,AFinf,AFSurvey,AMo,AMc,Bae,KaEx,KaGen,MMP,SaIt,SaMorse}. In \cite{Mo,Ban} J.~Moser and A.~Banyaga assumed that the restriction $\phi|_N$ is $C^1$-close to the inclusion $N\to M$. In most other results this condition was relaxed to the condition that $\phi$ be Hofer close to the identity. On the other hand, strong conditions on $N$ were imposed, namely that it is of contact type or regular (i.e., ``fibering''). For a further discussion of the history of the problem see \cite{ZiLeafwise}.

The main result of this article is that the conditions on $N$ (except for closedness) can be removed altogether, if $\phi$ is the time-1-map of a Hamiltonian flow whose restriction to $N$ stays $C^0$-close to the inclusion $N\to M$. To state this result, we need the following. Let $X$ be a topological space. We define its \emph{cup-length $\cl(X)$} to be the infimum of all integers $m\in\{0,1,2,\ldots\}$ with the following property. If $R$ is a commutative ring with unity, 
$k_1,\ldots,k_m\in\{1,2,\ldots\}$, and $a_i\in H^{k_i}(X,R)$,%
\footnote{This denotes the degree $k_i$ singular cohomology of $X$ with coefficients in $R$.}%
 for $i=1,\ldots,m$, then
\[a_1\smile\cdots\smile a_m=0.%
\footnote{Here the cup product of an empty collection of classes is defined to be $1\in H^*(X,R)$. It follows that $\cl(\{\pt\})=1$. While defined slightly differently, $\cl(X)$ equals the usual cup-length and the cohomological category defined in \cite[Definition 2]{HoLag}.}%
\]

Some part of the main result concerns the case in which the pair $(\phi,N)$ is $\OO$-nondegenerate. This notion will be defined in Section \ref{sec:nondeg}. It naturally generalizes the usual nondegeneracy in the case $N=M$ and transversality of the intersection $N\cap\phi^{-1}(N)$ in the Lagrangian case. 
We denote by $b_i(X,\Z_2)$ the $i$-th Betti number of $X$ with $\Z_2$-coefficients.
%
%
\begin{thm}[leafwise fixed points]\label{thm:leafwise} Let $(M,\om)$ be a symplectic manifold and $N\sub M$ be a closed coisotropic submanifold. Then there exists a $C^0$-neighbourhood%
\footnote{This means a neighbourhood in the compact open topology. This topology coincides with the uniform topology induced by the distance function on $M$ coming from any Riemannian metric on $M$.} %
$\UU\sub C(N,M)$ of the inclusion $N\to M$ and a neighbourhood $\OO$ of the diagonal
\[\wt N:=\{(y,y)\,\big|\,y\in N\big\}\]
in $N\x N$ with the following properties. 
\begin{enui}
\item\label{thm:leafwise:Fix} If $(\phi^t)_{t\in[0,1]}$ is a global Hamiltonian flow on $M$ satisfying $\phi^t|_N\in\UU$, for every $t\in[0,1]$, then 
\begin{equation}\label{eq:cl N}\big|\Fix(\phi^1,N)\big|\geq\cl(N).\end{equation}
If in addition the pair $(\phi^1,N)$ is $\OO$-nondegenerate then 
\begin{equation}\label{eq:Fix phi 1 N}\big|\Fix(\phi^1,N)\big|\geq\sum_{i=0}^{\dim N}b_i(N,\Z_2).\end{equation}
\item\label{thm:leafwise:dense} We denote
\begin{equation}\label{eq:Ham M om U}\Ham(M,\om,\UU):=\big\{\phi^1\,\big|\,(\phi^t)\textrm{ global Hamiltonian flow: }\phi^t|_N\in\UU\big\}.\end{equation}
The set
\[\big\{\phi\in\Ham(M,\om,\UU)\,\big|\,(\phi,N)\textrm{ is }\OO\textrm{-nondegenerate}\big\}\]
is dense in $\Ham(M,\om,\UU)$ in the strong (Whitney) $C^\infty$-topology. 
%
\end{enui}
\end{thm}
\begin{Rmks} \begin{itemize}

\item If $N\neq\emptyset$ then $\cl(N)\geq1$. Hence by Theorem \ref{thm:leafwise} the map $\phi^1$ has a leafwise fixed point.


\item Condition (\ref{thm:leafwise:dense}) means that among Hamiltonian diffeomorphisms that arise from a flow whose restriction to $N$ stays $C^0$-close to the inclusion $N\to M$, the $\OO$-nondegenerate ones are ``generic''.

\item The hypothesis that $(\phi^1,N)$ be $\OO$-nondegenerate, is satisfied if $(\phi^1,N)$ is nondegenerate in the sense of \cite{ZiLeafwise} (see Section \ref{sec:nondeg}).

\end{itemize}

\end{Rmks}
This theorem is optimal in the sense that the $C^0$-hypothesis $\phi^t|_N\in\UU$ cannot be replaced by the ``$C^{-1}$-assumption'' that $\phi^t$ is Hofer-small for every $t$: To explain this comment, recall that for $H\in C^\infty(M,\R)$ we have $dH=\om(X_H,\cdot)$. Hence the oscillation $\sup H-\inf H$ can be thought of as a $C^{-1}$-norm for the Hamiltonian vector field $X_H$. Thus the Hofer norm of a Hamiltonian diffeomorphism can roughly be viewed as its ``$C^{-1}$-distance'' to the identity. Optimality now follows from the next remark.
%
%
\begin{Rmk}[Hofer-smallness does not suffice]\label{rmk:Hofer not suffice} V.~L.~Ginzburg and B.~Z.~G\"urel \cite[Theorem 1.1]{GGFrag} recently proved that for $n\geq2$ there exists a closed smooth hypersurface $N\sub\R^{2n}$ and a compact subset $K\sub\R^{2n}$, such that for every $\eps>0$ there exists a smooth function $H:\R^{2n}\to\R$ with support in $K$, such that
\[\max H-\min H<\eps,\quad\Fix\big(\phi_H^1,N\big)=\emptyset.\]
Note that $\phi_H^t$ has Hofer norm bounded above by $\eps t\leq\eps$, for every $t\in[0,1]$.

This theorem and Theorem \ref{thm:leafwise} do not contradict each other, since Hofer-smallness does not imply the hypothesis $\phi^t|_N\in\UU$ of Theorem \ref{thm:leafwise}. More precisely, let $(M,\om)$ be a symplectic manifold of positive dimension, and $\emptyset\neq N\sub M$ a coisotropic submanifold. Then there exists a $C^0$-neighbourhood $\UU$ of the inclusion $N\to M$, such that for every Hofer neighbourhood $\VV\sub\Ham(M,\om)$ of the identity there exists $\phi\in\VV$ for which $\phi|_N\not\in\UU$. 

To show this, we choose $x\in N$ and a point $y\in M\wo\{x\}$ that lies in a connected Darboux chart around $x$. 
We define 
\[\UU:=\big\{\psi\in C(N,M)\,\big|\,\psi(x)\neq y\big\}.\]
This is a $C^0$-neighbourhood of the inclusion $N\to M$. Let $\VV\sub\Ham(M,\om)$ be a Hofer neighbourhood of the identity. There exists $\phi\in\VV$, such that $\phi(x)=y$. We may construct such a Hamiltonian diffeomorphism in any connected Darboux chart containing $x$ and $y$. We have $\phi|_N\not\in\UU$. Hence $\UU$ has the desired property.
\end{Rmk}
\begin{Rmk}[extreme cases] In the extreme cases $N=M$ and $N$ Lagrangian A.~Weinstein \cite{WePert,WeCrit} proved lower bounds on $\big|\Fix(\phi,N)\big|$ for the time-one flow $\phi$ of a $C^0$-small Hamiltonian vector field. Such a $\phi$ satisfies a stronger condition than the one in Theorem \ref{thm:leafwise}(\ref{thm:leafwise:Fix}). 
\end{Rmk}
\begin{Rmk}[translated points] In \cite[p.~96]{SaMorse} S.~Sandon also proved a leafwise fixed point result in a special $C^0$-context (amongst other results). More precisely, she showed a lower bound on the number of translated points of the time-1-map of a $C^0$-small contact isotopy on a closed contact manifold. As explained in \cite[p.~97]{SaMorse}, these points are leafwise fixed points of a Hamiltonian lift of the time-1-map to the symplectization.
\end{Rmk}

\begin{rmk}[$C^0$-small Hamiltonian diffeomorphism]\label{rmk:C0} Let $(M,\om)$ be a symplectic manifold. We denote by $\Ham_c(M,\om)$ the group of compactly supported Hamiltonian diffeomorphisms on $M$.%
\footnote{This is the set of Hamiltonian time-1-flows of compactly supported smooth functions on $[0,1]\x M$.}%
 Let $\UU$ be a neighbourhood of the identity in $\Ham_c(M,\om)$ \wrt the $C^0$ Whitney topology (= strong $C^0$ topology).%
\footnote{If $M$ is closed than this topology is the compact open topology.}
 
\textbf{Question:} Does there exist a neighbourhood of the identity $\VV$ in this topology, such that for every $\phi\in\VV$ there exists a compactly supported $H\in C^\infty\big([0,1]\x M,\R\big)$ satisfying $\phi_H^1=\phi$ and $\phi_H^t\in\UU$, for every $t$? 

If the answer to this question is yes then Theorem \ref{thm:leafwise} implies that given a closed coisotropic submanifold of $M$, every $\phi$ in some strong-$C^0$ neighbourhood of the identity in $\Ham_c(M,\om)$ has a leafwise fixed point. The answer is indeed yes for $M=\R^{2n}$ with the standard symplectic form. This follows from \cite[Lemma 3.2]{Se}. 
The answer is also affirmative if $M$ is a closed surface. This follows from the proof of \cite[Proposition 3.1]{Se} and from \cite[Lemma 3.2]{Se}. 
In general the question appears to be open.
\end{rmk}
To show that there exist $\UU$ and $\OO$ satisfying condition (\ref{thm:leafwise:Fix}) of Theorem \ref{thm:leafwise}, the idea is to construct a symplectic submanifold $\wt M$ of the manifold $M\x N$ that contains the diagonal embedding $\wt N$ of $N$ as a Lagrangian submanifold. This submanifold is constructed by using a smooth family of local slices in $N$ that are transverse to the isotropic distribution $TN^\om$. Such a family can be viewed as a substitute for the symplectic quotient of $N$. (This quotient is well-defined precisely if $N$ is regular.)

Because the restriction of the Hamiltonian flow $(\phi^t)$ to $N$ stays $C^0$-close to the inclusion $N\to M$, we can lift it from $M$ to $\wt M$. Intersection points of $\wt N$ with its image under the lifted time-1-flow correspond to leafwise fixed points of $\phi^1$. Using Weinstein's theorem we identify a neighbourhood of $\wt N$ in $\wt M$ with a neighbourhood of the zero section of the cotangent bundle $T^*\wt N$. Since the zero section is not displaceable in a Hamiltonian way, it will now follow that there exist $\UU$ and $\OO$ satisfying condition (\ref{thm:leafwise:Fix}) of Theorem \ref{thm:leafwise}. ($\OO$-nondegeneracy translates into transversality of the Lagrangian intersection in the cotangent bundle.)

This proof refines an approach from \cite{ZiLeafwise} that only works in the regular case. (See Remark \ref{rmk:regular} below.) 
\section{Linear holonomy of a foliation and nondegeneracy}\label{sec:nondeg}
In the second part of Theorem \ref{thm:leafwise}(\ref{thm:leafwise:Fix}) the pair $(\phi^1,N)$ is assumed to be $\OO$-nondegenerate. This is a weak version of the notion of nondegeneracy that was introduced in \cite{ZiLeafwise}.

The notion of $\OO$-nondegeneracy is based on the linear holonomy of a foliation. To explain this, let $M$ be a manifold and $\F$ a foliation on $M$, i.e., a maximal smooth atlas of foliation charts. We denote by $T\F\sub TM$ the tangent bundle of $\F$, and by
\[\pr:=\pr^\F:TM\to\NN\F:=TM/T\F\]
the canonical projection onto the normal bundle of $\F$. For $x\in M$ we write $T_x\F:=(T\F)_x$ and $\NN_x\F:=(\NN\F)_x$. ($T_x\F$ is the linear subspace of $T_xM$ tangent to the leaf through $x$.)

Let $F$ be a leaf of $\F$, $a\leq b$ and $x\in C^\infty\big([a,b],F\big)$. The linear holonomy of $\F$ along $x$ is a linear map $\hol^\F_x:\NN_{x(a)}\F\to \NN_{x(b)}\F$, whose definition is based on the following result.
\begin{prop}\label{prop:hol} Let $M,\F,F,a,b$ and $x$ be as above, $Y$ a manifold, and $y_0\in Y$. Then the following statements hold.
\begin{enui}\item\label{prop:hol:u} For every linear map $T: T_{y_0}Y\to T_{x(a)}M$ there exists a map $u\in C^\infty([a,b]\x Y,M)$ such that
\begin{eqnarray}\label{eq:u 0 x}&u(\cdot,y_0)=x,\quad \dd_tu(t,y)\in T_{u(t,y)}\F,\,\forall t\in[a,b],\,y\in Y,&\\
\label{eq:d u a y}&d(u(a,\cdot))(y_0)=T.& 
\end{eqnarray}
\item\label{prop:hol:u u'} Let $u,u'\in C^\infty([a,b]\x Y,M)$ be maps satisfying (\ref{eq:u 0 x}), such that
\begin{equation}\label{eq:u u' a}\pr d(u(a,\cdot))(y_0)=\pr d(u'(a,\cdot))(y_0).\end{equation}
Then $\pr d(u(b,\cdot))(y_0)=\pr d(u'(b,\cdot))(y_0)$. 
\end{enui}
\end{prop}
\begin{proof} See \cite[Proposition 2.1]{ZiLeafwise}.
\end{proof}
We define $Y:=\NN_{x(a)}\F$ and $y_0:=0$, and we canonically identify $T_0\big(\NN_{x(a)}\F\big)=\NN_{x(a)}\F$. We choose a linear map $T:\NN_{x(a)}\F\to T_{x(a)}M$, such that $\pr T=\id_{\NN_{x(a)}\F}$, and a map $u\in C^\infty\big([a,b]\x \NN_{x(a)}\F,M\big)$ such that (\ref{eq:u 0 x}) and (\ref{eq:d u a y}) hold. We define \emph{linear holonomy of $\F$ along $x$} to be the map
\begin{equation}\label{eq:hol F x}\hol^\F_x:=\pr d(u(b,\cdot))(0):\NN_{x(a)}\F(=T_0(\NN_{x(a)}\F))\to \NN_{x(b)}\F.\end{equation}
It follows from Proposition \ref{prop:hol} that this map is well-defined. Condition (\ref{eq:u 0 x}) in that proposition means that the $\hol^\F_x$ is obtained by sliding along the leaves of $\F$. The linear holonomy can be viewed as the linearization of the holonomy of a foliation as defined for example in Sec.~2.1 in the book \cite{MM}. For a given Riemannian metric on $M$, $\hol^\F_x$ corresponds to the holonomy of a Bott connection along $x$ (as defined e.g. in \cite[Lemma 6.1.5.]{CC}).

Let $(M,\om)$ be a symplectic manifold and $N\sub M$ a coisotropic submanifold. We denote by $\F^{N,\om}$ the isotropic foliation (=characteristic foliation) on $N$, and abbreviate
\begin{eqnarray*}&\hol^{N,\om}:=\hol^{\F^{N,\om}},&\\
&\pr:=\pr^{\F^{N,\om}}:TN\to \NN T\F^{N,\om}=TN/(TN)^\om.&
\end{eqnarray*}
Let $U\sub M$ be open and $\phi\in C^\infty(U,M)$. 
Let $F\sub N$ be an isotropic leaf and $x\in C^\infty([0,1],F)$ a path, such that
\begin{equation}\label{eq:x 0 U phi x}x(0)\in U,\quad\phi(x(0))=x(1).
\end{equation}
We call the triple $(\phi,N,x)$ \emph{nondegenerate} iff the only vector $v\in T_{x(0)}N\cap d\phi(x(0))^{-1}(T_{x(1)}N)$ satisfying
\begin{equation}\label{eq:hol x pr}\hol^{N,\om}_x\pr_{x(0)}v=\pr_{x(1)}d\phi(x(0))v,\end{equation}
is $v=0$. 

Let $\OO\sub N\x N$. 
We call $(\phi,N)$ (or simply $\phi$) \emph{$\OO$-nondegenerate} iff the following holds. Let $F\sub N$ be an isotropic leaf and $x\in C^\infty([0,1],F)$ a path, such that (\ref{eq:x 0 U phi x}) holds and 
\begin{equation}\label{eq:x 0 x t O}\big(x(0),x(t)\big)\in \OO,\,\forall t\in[0,1].
\end{equation}
Then $(\phi,N,x)$ is nondegenerate. This explains the $\OO$-nondegeneracy condition in Theorem \ref{thm:leafwise}.

\begin{Rmk} We call $(\phi,N)$ \emph{nondegenerate} iff it is $N\x N$-nondegenerate. This version of nondegeneracy was used in \cite{ZiLeafwise}. In the case $N=M$ this condition means that for every $x_0\in\Fix(\phi)$, $1$ is not an eigenvalue of $d\phi(x_0)$. Furthermore, in the case that $N$ is a connected Lagrangian and $\phi$ is submersive, $(\phi,N)$ is nondegenerate if and only if $N\pitchfork\phi^{-1}(N)$, i.e. $N$ and $\phi^{-1}(N)$ intersect transversely.
\end{Rmk}
\section{Proof of Theorem \ref{thm:leafwise}}\label{sec:proof}
The proof of Theorem \ref{thm:leafwise} is based on the following lemma. Let $(M,\om)$ be a symplectic manifold and $N\sub M$ a closed coisotropic submanifold. We equip $M\x M$ with the symplectic form $\om\oplus(-\om)$ and denote
\[2n:=\dim M,\quad k:=\codim N,\quad m:=n-k,\]
and by 
\[\iota_N:N\to M\]
the canonical inclusion.
\begin{lemma}[existence of a good symplectic submanifold of $M\x N$]\label{le:wt M} \begin{enui}
\item\label{le:wt M:exists} There exists a symplectic submanifold (without boundary) $\wt M$ of $M\x M$ of dimension $2n+2m$ that is contained in $M\x N$ and contains the diagonal
\[\wt N:=\big\{(x,x)\,\big|\,x\in N\big\}.\]
\item\label{le:wt M:wt N} Every such $\wt M$ contains $\wt N$ as a Lagrangian submanifold. 
\end{enui}
\end{lemma}
The idea of proof of this lemma is to choose an open neighbourhood $U\sub M$ of $N$, a retraction $r:U\to N$, and for each $y_0\in N$ a local slice $S_{y_0}\sub N$ through $y_0$ that is transverse to the isotropic foliation, and to define
\[``\wt M:=\big\{(x,y)\in U\x N\,\big|\,y\in S_{r(x)}\big\}".\]
The rank of the isotropic distribution equals $\codim N$. Hence $\dim(S_{y_0})=2n-2k=2m$, and therefore $\wt M$ will have the desired dimension $2n+2m$.
\begin{proof}[Proof of Lemma \ref{le:wt M}] \textbf{(\ref{le:wt M:exists}):} We will choose the submanifold $\wt M$ to be a subset of the image of the map $F$ whose existence is stated in the following claim.
\begin{claim}[existence of a good map to $M\x N$]\label{claim:hhat M} There exist a smooth manifold $\hhat M$ of dimension $2n+2m$, a compact subset $\hhat N\sub\hhat M$, and a smooth map
\[F:\hhat M\to M\x N\]
that maps $\hhat N$ bijectively to $\wt N$, such that the following holds. Let
\[\hhat x\in\hhat N.\]
We denote 
\begin{equation}\label{eq:wt V T}\wt V:=T_{F(\hhat x)}(M\x N),\,\,\wt\Om:=\big(\om\oplus(-\iota_N^*\om)\big)_{F(\hhat x)},\,\,\wt W:=dF(\hhat x)T_{\hhat x}\hhat M.\end{equation}
Then 
\begin{equation}\label{eq:wt W}\wt W\oplus\wt V^{\wt\Om}=\wt V.
\end{equation}
\end{claim}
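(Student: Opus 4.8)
The plan is to construct $\hhat M$ as a fiber bundle over $N$ whose fiber over $y_0 \in N$ is (an open neighborhood of $y_0$ in) a local slice $S_{y_0}$ transverse to the isotropic foliation, twisted by the retraction $r$. Concretely, first I would choose, for each $y_0 \in N$, a submanifold $S_{y_0} \sub N$ through $y_0$ of dimension $m = \dim N - k$ that is transverse to $TN^\om$ at $y_0$ (hence, after shrinking, everywhere along $S_{y_0}$); by a standard partition-of-unity / exponential-map argument these can be chosen to depend smoothly on $y_0$, i.e.\ to fit together into a smooth submanifold $\mathcal S \sub N \x N$ with $\mathcal S \to N$ (first projection) a submersion and $(y_0,y_0) \in \mathcal S$ for all $y_0$. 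Then I would set
\[
\hhat M := \big\{(x, y) \in U \x N \,\big|\, (r(x), y) \in \mathcal S\big\},
\qquad
\hhat N := \big\{(y,y)\,\big|\, y \in N\big\},
\]
and let $F : \hhat M \to M \x N$ be the inclusion followed by $U \x N \hookrightarrow M \x N$. Since $r$ is a submersion (being a retraction onto $N$, which is a submanifold of $U$, $dr$ is surjective along $N$, hence near $N$) and $\mathcal S \to N$ is a submersion, $\hhat M$ is cut out transversally and is a smooth manifold of dimension $2n + m + (\dim\mathcal S - \dim N) = 2n + m + m = 2n + 2m$. The set $\hhat N$ is compact since $N$ is closed, sits inside $\hhat M$ because $r|_N = \id$ and $(y,y)\in\mathcal S$, and $F$ sends it bijectively onto $\wt N$.

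Next I would verify the tangent-space condition \eqref{eq:wt W} at a point $\hhat x = (y, y) \in \hhat N$, writing $p := F(\hhat x) = (y,y) \in \wt N$. Here $\wt V = T_y M \oplus T_y N$, the form $\wt\Om = \om_y \oplus (-\om_y|_{T_yN})$, and $\wt W = T_p \hhat M$. The key point is to identify $\wt W$. Differentiating the defining equations, a vector $(\xi, \eta) \in T_y M \oplus T_y N$ lies in $\wt W$ iff $\eta \in T_y N$ is arbitrary modulo the slice condition $(dr_y \xi, \eta) \in T_{(y,y)}\mathcal S$; since $dr_y : T_y M \to T_y N$ is surjective with kernel a complement we may take to be $T_y N^\om$ (this is the one genuine choice — see below), and $T_{(y,y)}\mathcal S$ projects onto $T_y N$ in the first factor with second-factor fiber $T_y S_y$, one computes
\[
\wt W = \big\{(\xi, \eta)\,\big|\, \eta - dr_y\xi \in T_y S_y\big\},
\]
which has dimension $2n + m - (k) = 2n+2m$ once one checks $dr_y\xi$ ranges over all of $T_yN$ and $T_yS_y$ has dimension $m$, $\codim$ in $T_yN$ equal to $k$. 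Then $\wt V^{\wt\Om}$ is the $\wt\Om$-orthogonal of $\wt V$ inside itself — but since $\wt V$ is the whole space, $\wt V^{\wt\Om}$ is the kernel of $\wt\Om$ as a bilinear form on $\wt V$; one checks directly that this kernel is $\{(v, w) : v \in T_yN^\om,\ w = $ the image of $v$ under the canonical identification$\}$, i.e.\ the graph of the isomorphism $T_yN^\om \to (\text{its image in }T_yN)$, of dimension $k$. The identity \eqref{eq:wt W} then reduces to the linear-algebra statement that $\wt W$ and this $k$-dimensional kernel together span $\wt V$, which follows because the map $(\xi,\eta) \mapsto \eta - dr_y\xi$ restricted to the kernel of $\wt\Om$ lands in a complement of $T_yS_y$ in $T_yN$ — exactly because $S_y$ was chosen transverse to $TN^\om$ and $dr_y$ restricts to (minus) the identity on $T_yN^\om$ appropriately.

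The main obstacle I anticipate is the smooth dependence of the slices $S_{y_0}$ on $y_0$ — i.e.\ producing the submanifold $\mathcal S \sub N \x N$ — together with the careful bookkeeping in the linear-algebra computation of $\wt W$ and $\wt V^{\wt\Om}$: one must make compatible choices of the retraction $r$ (so that $\ker dr_y$ is transverse to $T_yN$, or better, equals $T_yN^\om$ along $N$, which one can arrange via a symplectic/Riemannian tubular neighborhood) and of the slices (transverse to $TN^\om$), and then check that these two transversalities combine to give \eqref{eq:wt W} rather than merely a dimension count. A clean way to handle the slice family is to fix an auxiliary Riemannian metric on $N$, let $D \sub TN$ be the orthogonal complement of $TN^\om$, and set $S_{y_0} := \exp_{y_0}(D_{y_0} \cap B_\eps)$ for small $\eps$; this is manifestly smooth in $y_0$ and transverse to $TN^\om$ at $y_0$. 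The rest is then a matter of organizing the differentials at points of $\hhat N$, where $r$ and the exponential maps are to first order the identity, so all the computations localize to the single tangent space $T_yM \oplus T_yN$.
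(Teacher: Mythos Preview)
Your approach is essentially the paper's: both build the slice family via the Riemannian exponential of the $g$-orthogonal complement of $TN^\om$, and both pull the resulting slice bundle back along the retraction $r$. The paper parametrizes $\hhat M$ abstractly as the bundle $r^*(TN_\om)$ over $U$ and then maps into $M\times N$ via $F(x,\bar v)=(x,\exp_{r(x)}\Phi(\bar v))$, whereas you embed $\hhat M$ directly in $U\times N$; these are equivalent descriptions of the same object, and your final linear-algebra reduction is the paper's.

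Two computational slips to fix. First, the slices have dimension $\dim N-k=2n-2k=2m$, not $m$; with that correction the dimension count for $\hhat M$ comes out to $2n+2m$. Second, and more seriously, your description of $\wt V^{\wt\Om}$ is wrong: since $\om$ is nondegenerate on $T_yM$, a pair $(v,w)$ in the kernel of $\wt\Om=\om_y\oplus(-\iota_N^*\om)_y$ on $T_yM\oplus T_yN$ must have $v=0$, so $\wt V^{\wt\Om}=\{0\}\times T_yN^\om$, not a graph. Once this is corrected, the spanning condition \eqref{eq:wt W} drops out of your own description of $\wt W$: if $(0,w)\in\wt W$ with $w\in T_yN^\om$, then $w=w-dr_y(0)\in T_yS_y=(T_yN^\om)^\perp$, forcing $w=0$; hence $\wt W\cap\wt V^{\wt\Om}=0$, and since $\dim\wt W+\dim\wt V^{\wt\Om}=(2n+2m)+k=\dim\wt V$, the sum is all of $\wt V$. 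Note also that because $r|_N=\id$, the differential $dr_y$ is already the identity on $T_yN\supset T_yN^\om$, so the extra arrangement of $\ker dr_y$ you propose is both unnecessary and incompatible with this.
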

(Here we denote by $\wt V^{\wt\Om}$ the $\wt\Om$-complement of $\wt V$.)
\begin{proof}[Proof of Claim \ref{claim:hhat M}] We will use the next claim, which roughly states that there exists a smooth family of local slices on $N$ that are transverse to the isotropic foliation. We denote
\[T_yN_\om:=T_yN/T_yN^\om,\,\forall y\in N,\quad TN_\om:=TN/TN^\om\]
\begin{claim}[local slices on $N$]\label{claim:slices} There exists a smooth map
\[f:TN_\om\to N,\]
such that, defining
\[f_y:=f(y,\cdot),\]
we have
\begin{eqnarray}\label{eq:f y 0}&f_y(0)=y,&\\
\label{eq:d f y}&df_y(0)(T_yN_\om)\oplus T_yN^\om=T_yN,&
\end{eqnarray}
for every $y\in N$. Here in (\ref{eq:d f y}) we canonically identified $T_0(T_yN_\om)$ with $T_yN_\om$.
\end{claim}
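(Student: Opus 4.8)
The goal is to construct a smooth family of local slices $f_y : T_yN_\om \to N$, transverse to the isotropic foliation, varying smoothly in $y$. Here is the plan.

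\medskip

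\noindent\textbf{Step 1: Choosing a complement bundle.} First I would pick a smooth subbundle $E \subseteq TN$ that is complementary to the isotropic distribution $TN^\om$, i.e.\ $TN = TN^\om \oplus E$. Such a complement exists by a standard partition-of-unity argument (choose a Riemannian metric on $N$ and let $E := (TN^\om)^\perp$). The composition $E_y \hookrightarrow T_yN \twoheadrightarrow T_yN_\om = T_yN/T_yN^\om$ is then a bundle isomorphism, so it suffices to build a smooth family of maps $g_y : E_y \to N$ with $g_y(0) = y$ and $dg_y(0) = \id_{E_y}$ (viewing $E_y \subseteq T_yN$), and then set $f_y := g_y$ precomposed with the inverse of this isomorphism. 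The condition (\ref{eq:d f y}) is then automatic, since $df_y(0)(T_yN_\om) = E_y$ and $E_y + T_yN^\om = T_yN$ by construction, while (\ref{eq:f y 0}) is $g_y(0)=y$.

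\medskip

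\noindent\textbf{Step 2: Exponential map / tubular neighbourhood construction.} To produce the family $g_y$, I would fix a Riemannian metric on $N$ and use its exponential map $\exp : TN \to N$, which is smooth and satisfies $\exp_y(0) = y$, $d(\exp_y)(0) = \id_{T_yN}$. Restricting to the subbundle $E \subseteq TN$ gives a smooth map $g := \exp|_E : E \to N$, and $g_y := g(y,\cdot) : E_y \to N$ has exactly the two required properties at the origin. Composing with the bundle isomorphism $TN_\om \xrightarrow{\sim} E$ from Step 1 yields the desired map $f : TN_\om \to N$. Smoothness in $y$ follows from smoothness of $\exp$ and of the subbundle $E$.

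\medskip

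\noindent\textbf{Step 3: Assembling $f$ as a map on the total space.} The only mild subtlety is to phrase everything as a single smooth map on the manifold $TN_\om$ (the total space of the quotient bundle), rather than a pointwise-in-$y$ family. Since $TN_\om \cong E$ as vector bundles over $N$ via a smooth isomorphism, and $E$ is an embedded submanifold of the smooth manifold $TN$, and $\exp$ is smooth on a neighbourhood of the zero section (or globally, if one uses a complete metric, or shrinks $E$ to an open sub-disk-bundle, which is harmless since only the germ at the zero section matters), the map $f$ is smooth as a map of manifolds. One should note that the statement of Claim \ref{claim:slices} does not require $f_y$ to be defined on all of $T_yN_\om$ or to be an embedding — only the first-order data at $0$ is constrained — so no further shrinking or injectivity argument is needed.

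\medskip

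\noindent\textbf{Main obstacle.} Honestly, this claim is essentially a soft, first-order statement and I do not expect a serious obstacle: the content is just ``pick a complement to an involutive subbundle and push out along a tubular-neighbourhood map.'' The one point deserving care is the smooth dependence on $y$ — i.e.\ ensuring the slices genuinely form a smooth family and not merely a measurable or continuous one — which is handled cleanly by working with the subbundle $E$ and the (smooth) exponential map globally on the total space, rather than choosing slices point by point. If one wanted the slices to actually be submanifolds transverse to the foliation (which is how they are used in the construction of $\wt M$), one would additionally shrink $E$ to a small disk bundle so that each $f_y$ is an embedding transverse to $TN^\om$ near $y$; but as the claim is literally stated, only (\ref{eq:f y 0}) and (\ref{eq:d f y}) are needed, and these are immediate from the construction.
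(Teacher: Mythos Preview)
Your proposal is correct and is essentially identical to the paper's proof: the paper also picks a Riemannian metric on $N$, takes $E=(TN^\om)^\perp$, lets $\Phi:TN_\om\to E$ be the inverse of the quotient isomorphism, and defines $f:=\exp\circ\Phi$. Your remarks about only needing the germ at the zero section are apt, and in fact since $N$ is closed the metric is complete, so $\exp$ is globally defined and no shrinking is needed.
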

\begin{proof}[Proof of Claim \ref{claim:slices}] We choose a Riemannian metric $g$ on $N$ and denote by $\perp$ the orthogonal complement \wrt $g$. The map
\[TN\cont(TN^\om)^\perp\ni(y,v)\mapsto \big(y,v+T_yN^\om\big)\in TN_\om,\]
is an isomorphism of vector bundles over $N$. We denote by $\Phi$ the inverse of this map and by $\exp$ the exponential map \wrt $g$, and define
\[f:=\exp\circ\Phi:TN_\om\to N.\]
Let $y\in N$. Condition (\ref{eq:f y 0}) is satisfied. Furthermore, since $f_y=\exp_y\circ\Phi_y$, we have
\[df_y(0)T_yN_\om=d\exp_y(0)(T_yN^\om)^\perp=(T_yN^\om)^\perp.\]
Equality (\ref{eq:d f y}) follows. This proves Claim \ref{claim:slices}.
\end{proof}
We choose $f$ as in Claim \ref{claim:slices}, an open neighbourhood $U\sub M$ of $N$, and a smooth retraction
\[r:U\to N.\]
(This means that $r$ equals the identity on $N$.) We define
\[\hhat M:=r^*TN_\om=\big\{(x,\BAR v)\,\big|\,x\in U,\,\BAR v\in T_{r(x)}N_\om\big\}.\]
This is a submanifold of $M\x TN_\om$ of dimension $2n+2m$. We define
\begin{eqnarray*}&\hhat N:=\big\{(y,0_y)\,\big|\,y\in N\big\}\sub\hhat M,&\\
&F:\hhat M\to M\x N,\quad F(x,\BAR v):=\big(x,f(r(x),\BAR v)\big).&
\end{eqnarray*}
This map is smooth. Because of (\ref{eq:f y 0}) it maps $\hhat N$ bijectively to $\wt N$. 

To show \textbf{(\ref{eq:wt W})}, we define
\begin{eqnarray*}&R:U\to M\x N,\quad R(x):=(x,r(x)),&\\
&\hhat R:U\to\hhat M,\quad\hhat R(x):=(x,0_{r(x)}),&\\
&\iota:T_yN_\om\to\hhat M,\quad\iota(\BAR v):=(y,\BAR v).&
\end{eqnarray*}
Let $\hhat x=(y,0_y)\in\hhat N$. The map 
\[T_yM\x T_yN_\om\ni(v,\BAR v)\mapsto d\hhat R(y)v+d\iota(0)\BAR v\in T_{\hhat x}\hhat M\]
is an isomorphism. (Here we identified $T_0(T_yN_\om)=T_yN_\om$.) Since $R=F\circ\hhat R$, we have
\[dR(y)=dF(\hhat x)d\hhat R(y).\]
Since $F\circ\iota(\BAR v)=\big(y,f_y(\BAR v)\big)$, we have
\[dF(\hhat x)d\iota(0)=\big(0,df_y(0)\big):T_0(T_yN_\om)=T_yN_\om\to T_{F(\hhat x)}(M\x N).\]
It follows that
\begin{equation}\label{eq:wt W dF}\wt W=dF(\hhat x)T_{\hhat x}\hhat M=dR(y)T_yM+\{0\}\x df_y(0)T_yN_\om.\end{equation}
Since $\wt V^{\wt\Om}=\{0\}\x T_yN^\om$, it follows that
\begin{equation}\label{eq:wt W wt V}\wt W+\wt V^{\wt\Om}\cont dR(y)T_yM+\{0\}\x\big(df_y(0)T_yN_\om+T_yN^\om\big).\end{equation}
Using (\ref{eq:d f y}) and the fact that $dR(y)T_yM$ is the graph of a linear map (namely of $dr(y)$), the right hand side of (\ref{eq:wt W wt V}) equals $\wt V$. It follows that
\begin{equation}\label{eq:wt W wt V wt Om}\wt W+\wt V^{\wt\Om}=\wt V.\end{equation}
On the other hand, using (\ref{eq:wt W dF}), the facts that $dR(y)T_yM$ is a graph of a linear map, the sum (\ref{eq:d f y}) is direct, and that $\wt V^{\wt\Om}=\{0\}\x T_yN^\om$, it follows that 
\[\wt W\cap\wt V^{\wt\Om}=\{0\}.\]
(This also follows from (\ref{eq:wt W wt V wt Om}) and a dimension argument.) This proves (\ref{eq:wt W}) and completes the proof of Claim \ref{claim:hhat M}.
\end{proof}
We choose $\hhat M,\hhat N,F$ as in this claim. We show that the image $\wt M$ under $F$ of a suitable subset of $\hhat M$ is a symplectic submanifold of $M\x N$ that contains $\wt N=F(\hhat N)$, as desired. For this we show that $F$ is a symplectic immersion along $\hhat N$. 

Let $\hhat x\in\hhat N$. We define $\wt V,\wt\Om,\wt W$ as in (\ref{eq:wt V T}). 
\begin{claim}\label{claim:d F} The map $dF(\hhat x)$ is injective and $\wt W=dF(\hhat x)(T_{\hhat x}\hhat M)$ is a symplectic linear subspace of $\wt V$ \wrt $\wt\Om$.
\end{claim}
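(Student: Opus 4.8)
The plan is to derive the entire statement from the identity (\ref{eq:wt W}) of Claim \ref{claim:hhat M} by a dimension count. First I would record the trivial upper bound: $\wt W=dF(\hhat x)(T_{\hhat x}\hhat M)$ is a linear subspace of $\wt V=T_{F(\hhat x)}(M\x N)$ with $\dim\wt W\leq\dim\hhat M=2n+2m$, and equality holds precisely when $dF(\hhat x)$ is injective. Next I would compute the other two relevant dimensions. Writing $F(\hhat x)=(y,y)$ for the corresponding $y\in N$, one has $\dim\wt V=\dim M+\dim N=2n+(n+m)=3n+m$, while $\wt V^{\wt\Om}$ is the radical of the presymplectic form $\wt\Om=\om_y\oplus(-\iota_N^*\om)$ on $T_yM\oplus T_yN$; since $\om_y$ is nondegenerate on $T_yM$ and the kernel of $\iota_N^*\om$ at $y$ is $T_yN\cap T_yN^\om=T_yN^\om$, we get $\wt V^{\wt\Om}=\{0\}\x T_yN^\om$, of dimension $k=n-m$ — exactly as was already used in the proof of Claim \ref{claim:hhat M}.

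Combining these with (\ref{eq:wt W}), i.e.\ $\wt W+\wt V^{\wt\Om}=\wt V$, yields $\dim\wt W\geq\dim\wt V-\dim\wt V^{\wt\Om}=(3n+m)-(n-m)=2n+2m$. Together with the upper bound this forces $\dim\wt W=2n+2m$, so $dF(\hhat x)$ is injective, and moreover the sum in (\ref{eq:wt W}) is necessarily \emph{direct}: $\wt V=\wt W\oplus\wt V^{\wt\Om}$.

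To conclude that $\wt\Om$ restricts to a symplectic form on $\wt W$, I would take any $w\in\wt W$ lying in the kernel of $\wt\Om|_{\wt W}$, i.e.\ $\wt\Om(w,w')=0$ for all $w'\in\wt W$. Since $\wt V^{\wt\Om}$ is the radical of $\wt\Om$ on $\wt V$, the vector $w$ is automatically $\wt\Om$-orthogonal to $\wt V^{\wt\Om}$ as well, hence $\wt\Om(w,\cdot)$ vanishes on $\wt W+\wt V^{\wt\Om}=\wt V$; thus $w\in\wt V^{\wt\Om}$. By directness of the decomposition $\wt V=\wt W\oplus\wt V^{\wt\Om}$ this gives $w=0$, so $\wt\Om|_{\wt W}$ is nondegenerate.

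I do not expect a real obstacle here: once (\ref{eq:wt W}) is available, the claim is pure linear algebra. The only point that needs a little care is the identification of $\wt V^{\wt\Om}$ with the radical of $\wt\Om$ and the computation of its dimension $k$ — but this is precisely the computation carried out in the proof of Claim \ref{claim:hhat M}, so it can simply be invoked rather than repeated.
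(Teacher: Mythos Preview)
Your proposal is correct and follows essentially the same route as the paper: a dimension count using (\ref{eq:wt W}) together with $\dim\hhat M=2n+2m$ and $\dim\wt V^{\wt\Om}=k$ forces $\dim\wt W=2n+2m$ (hence injectivity) and $\wt W\cap\wt V^{\wt\Om}=\{0\}$, after which the nondegeneracy of $\wt\Om|_{\wt W}$ follows because the kernel of this restriction is $\wt W\cap\wt W^{\wt\Om}$. The only cosmetic difference is that the paper packages the last step as the identity $\wt W^{\wt\Om}=(\wt W+\wt V^{\wt\Om})^{\wt\Om}=\wt V^{\wt\Om}$, whereas you argue it elementwise; the content is identical.
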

\begin{proof}[Proof of Claim \ref{claim:d F}] Condition (\ref{eq:wt W}) implies that
\begin{equation}\label{eq:dim wt W}\dim\wt W\geq\dim\wt V-\dim\wt V^{\wt\Om}=2n+2m.\end{equation}
Since $\dim T_{\hhat x}\hhat M=2n+2m$, it follows that $dF(\hhat x)$ is injective. This proves the first assertion.

Since $\wt V\sub\wt V^{\wt\Om\wt\Om}$, we have
\[\wt W^{\wt\Om}\sub\wt W^{\wt\Om}\cap\wt V^{\wt\Om\wt\Om}\sub\big(\wt W+\wt V^{\wt\Om}\big)^{\wt\Om}=\wt V^{\wt\Om}.\]
In the last step we used (\ref{eq:wt W}). (The inclusions ``$\cont$'' also hold, but we will not use them.) Using that the sum (\ref{eq:wt W}) is direct, it follows that
\[\wt W\cap\wt W^{\wt\Om}\sub\wt W\cap\wt V^{\wt\Om}=\{0\}.\]
Hence $\wt W$ is a symplectic linear subspace of $\wt V$. This proves Claim \ref{claim:d F}.
\end{proof}
Using Claim \ref{claim:d F} and injectivity of $F|_{\hhat N}$, by Lemma \ref{le:emb} below there exists an open neighbourhood $\hhat U\sub\hhat M$ of $\hhat N$, such that $F|_{\hhat U}$ is an embedding. We define
\[\wt M:=\big\{\wt x\in F(\hhat U)\,\big|\,\om\oplus(-\om)\textrm{ nondegenerate on }T_{\wt x}(F(\hhat U))\big\}.\]
By Lemma \ref{le:nondeg} below applied with the pullback of $\om\oplus(-\om)$ by the inclusion of $F(\hhat U)$ into $M\x M$, the set $\wt M$ is open in $F(\hhat U)$. Therefore $\wt M$ is a symplectic submanifold of $M\x M$ of dimension $2n+2m$. It is contained in $M\x N$. Claim \ref{claim:d F} and the fact $F(\hhat N)=\wt N$ imply that $\wt M$ contains $\wt N$. Hence $\wt M$ has the required properties. This proves part (\ref{le:wt M:exists}) of Lemma \ref{le:wt M}.\\

To prove \textbf{(\ref{le:wt M:wt N})}, let $x\in N$. The vectors in $T_{(x,x)}\wt N$ are those of the form $(v,v)$ with $v\in T_xN$. For $v,w\in T_xN$ we have 
\[\om\oplus(-\om)\left(\left(\begin{array}{c}
v\\
v
\end{array}\right),\left(\begin{array}{c}
w\\
w
\end{array}\right)\right)=0.\]
Hence $\wt N$ is an isotropic submanifold of $\wt M$. It is Lagrangian, since $\dim\wt M=2n+2m=2\dim\wt N$. This proves (\ref{le:wt M:wt N}) and thus Lemma \ref{le:wt M}.
\end{proof}
Let $\wt M\sub M\x N$ be a symplectic submanifold as in Lemma \ref{le:wt M}. The next lemma relates the Hamiltonian vector fields of a function on $M$ and its lift to $\wt M$. This will be used in the proof of Theorem \ref{thm:leafwise} to relate the corresponding Hamiltonian flows.

We denote by $\iota_{\wt M}$ the inclusion of $\wt M$ into $M\x M$, and define
\begin{equation}\label{eq:wt om}\wt\om:=\iota_{\wt M}^*\big(\om\oplus(-\om)\big).\end{equation}
This is a symplectic form on $\wt M$. Let $H\in C^\infty(M,\R)$. We denote by
\[\pr_1:\wt M\sub M\x N\to M,\quad\pr_2:\wt M\to N\]
the canonical projections, and define
\[\wt H:=H\circ\pr_1:\wt M\to\R.\]
\begin{lemma}[Hamiltonian vector field on symplectic submanifold $\wt M$]\label{le:d pr 1} We have
\begin{eqnarray}\label{eq:d pr 1}&d\pr_1X_{\wt H}=X_H\circ\pr_1,&\\
\label{eq:d pr 2}&d\pr_2X_{\wt H}\in TN^\om.&
\end{eqnarray}
\end{lemma}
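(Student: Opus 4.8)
The plan is to prove both identities pointwise by linear algebra. Fix $\wt x=(x,y)\in\wt M$ with $x\in M$, $y\in N$, and abbreviate
\[
V:=T_xM\x T_yM,\qquad \Om:=\big(\om\oplus(-\om)\big)_{(x,y)},\qquad V':=T_xM\x T_yN,\qquad W:=T_{\wt x}\wt M .
\]
Since $\wt M\sub M\x N$ we have $W\sub V'$, and since $\wt M$ is a symplectic submanifold of $\big(M\x M,\om\oplus(-\om)\big)$ the subspace $W$ is $\Om$-symplectic; in particular $W\cap W^\Om=\{0\}$. The first step is to pin down the $\Om$-degenerate directions of $V'$: using nondegeneracy of $\om$ on $T_xM$ and the definition of the symplectic complement, a short computation gives
\[
\{0\}\x T_yN^\om=\big\{v\in V'\,\big|\,\Om(v,v')=0\ \ \forall\,v'\in V'\big\}\sub W^\Om ,
\]
the inclusion holding because for $\eta_0\in T_yN^\om$ and $(\xi,\eta)\in W\sub T_xM\x T_yN$ one has $\Om\big((0,\eta_0),(\xi,\eta)\big)=-\om(\eta_0,\eta)=0$. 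Hence $W\cap\big(\{0\}\x T_yN^\om\big)=\{0\}$; since $N$ is coisotropic of codimension $k$ we have $\dim\big(\{0\}\x T_yN^\om\big)=k$ and, by Lemma~\ref{le:wt M}, $\dim W=2n+2m=\dim V'-k$, so
\[
V'=W\oplus\big(\{0\}\x T_yN^\om\big).
\]

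Next I would unwind the defining equation of $X_{\wt H}$. Write $X_{\wt H}(\wt x)=(a,b)\in W$; then $b\in T_yN$ because $W\sub T_xM\x T_yN$, and therefore $\om(b,\eta_0)=0$ for every $\eta_0\in T_yN^\om$. Since $\wt H=H\circ\pr_1$ and $dH=\om(X_H,\cdot)$, the identity $\wt\om\big(X_{\wt H}(\wt x),\cdot\big)=d\wt H(\wt x)$ on $W$ becomes
\[
\om\big(a-X_H(x),\xi\big)=\om(b,\eta)\qquad\textrm{for all }(\xi,\eta)\in W .
\]
In view of the splitting just established and of the vanishing of $\om(b,\cdot)$ on $T_yN^\om$, this equation in fact holds for all $(\xi,\eta)\in V'$. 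Taking $\eta=0$ and $\xi\in T_xM$ arbitrary, nondegeneracy of $\om$ on $T_xM$ forces $a=X_H(x)$; as $d\pr_1X_{\wt H}(\wt x)=a$, this is (\ref{eq:d pr 1}). Substituting $a=X_H(x)$ back in leaves $\om(b,\eta)=0$ for all $\eta\in T_yN$, i.e.\ $b\in T_yN^\om$; as $d\pr_2X_{\wt H}(\wt x)=b$, this is (\ref{eq:d pr 2}).

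The only step that really uses the hypotheses is the direct-sum decomposition $V'=W\oplus(\{0\}\x T_yN^\om)$: it needs $\wt M$ to be genuinely symplectic (not merely presymplectic) in $M\x M$, to have exactly the dimension $2n+2m$ supplied by Lemma~\ref{le:wt M}, and $N$ to be coisotropic so that $\{0\}\x T_yN^\om$ is the whole kernel of $\om\oplus(-\om)$ along $T_xM\x T_yN$. Once that splitting is in hand, everything else is a direct unwinding of the definition of the Hamiltonian vector field, so I do not expect any genuine obstacle.
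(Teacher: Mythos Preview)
Your proof is correct and follows essentially the same route as the paper: both arguments hinge on the decomposition $T_{\wt x}(M\x N)=T_{\wt x}\wt M\oplus\big(\{0\}\x T_yN^\om\big)$ and then test the defining equation of $X_{\wt H}$ against vectors of the form $(v,0)$ and $(0,w)$. The only organizational difference is that you establish the splitting up front and use it uniformly for both parts, whereas the paper derives the equivalent statement $T_yN\sub H_y+T_yN^\om$ inside the proof of (\ref{eq:d pr 2}) and treats (\ref{eq:d pr 1}) more tersely; your presentation arguably makes the step of plugging $(v,0)$ into the identity on $T\wt M$ more transparently justified.
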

\begin{proof}[Proof of Lemma \ref{le:d pr 1}] To see \textbf{(\ref{eq:d pr 1})}, observe that
\[\wt\om(X_{\wt H},\cdot)=d\wt H=(dH)\circ\pr_1d\pr_1=\om\big(X_H\circ\pr_1,d\pr_1\cdot\big).\]
Hence, for every $\wt x=(x,y)\in\wt M$ and $v\in T_xM$, we have
\[\om\big(X_H(x),v\big)=\wt\om\big(X_{\wt H}(\wt x),(v,0)\big)=\om\big(d\pr_1X_{\wt H}(\wt x),v\big).\]
The second equality follows from the definition (\ref{eq:wt om}) of $\wt\om$. It follows that
\[X_H(x)=d\pr_1X_{\wt H}(\wt x).\]
This proves (\ref{eq:d pr 1}).

We prove \textbf{(\ref{eq:d pr 2})}. Let $\wt x=(x,y)\in\wt M$. We define
\begin{eqnarray*}&H_y:=\big\{w\in T_yN\,\big|\,(0,w)\in T_{\wt x}\wt M\big\},&\\
&\wt V:=T_{\wt x}(M\x N),\quad\wt\Om:=\big(\om\oplus(-\iota_N^*\om)\big)_{\wt x}.&
\end{eqnarray*}
Since $T_{\wt x}\wt M$ is a symplectic subspace of $\wt V$ of maximal dimension $2n+2m$, we have
\[\wt V=T_{\wt x}\wt M+\wt V^{\wt\Om}.\]
Combining this with the fact $\wt V^{\wt\Om}=\{0\}\x T_yN^\om$, it follows that 
\begin{equation}\label{eq:T y N}T_yN\sub H_y+T_yN^\om.\end{equation}
For every $w\in H_y$ we have
\begin{align*}0&=dHd\pr_1(\wt x)(0,w)\\
&=d\wt H(\wt x)(0,w)\\
&=\wt\om\big(X_{\wt H}(\wt x),(0,w)\big)\\
&=-\om\big(d\pr_2 X_{\wt H}(\wt x),w\big).
\end{align*}
Because of (\ref{eq:T y N}) it follows that $\om\big(d\pr_2 X_{\wt H}(\wt x),w\big)=0$, for every $w\in T_yN$. This shows (\ref{eq:d pr 2}) and completes the proof of Lemma \ref{le:d pr 1}.
\end{proof}
The next lemma produces leafwise fixed points for a $\phi\in\Ham(M,\om)$ out of Lagrangian intersection points of $\wt N$ and its translation under a lift of $\phi$. 
Let
\[H\in C^\infty\big([0,1]\x M,\R\big).\]
We define
\begin{equation}\label{eq:wt H}\wt H_t:=H_t\circ\pr_1:\wt M\to\R,\quad\wt H:=(\wt H_t)_{t\in[0,1]}.
\end{equation}
\begin{lemma}[Lagrangian intersection points and leafwise fixed points]\label{le:wt x} 
For every
\[\wt x\in\wt N\cap(\phi_{\wt H}^1)^{-1}(\wt N)\]
we have
\[x:=\pr_1(\wt x)\in\Fix(\phi_H^1,N).\]
\end{lemma}
\begin{Rmk} The set $(\phi_{\wt H}^1)^{-1}(\wt N)$ is contained in the domain of the flow $\phi_{\wt H}^1$, which is an open subset of $\wt M$.
\end{Rmk}
\begin{proof}[Proof of Lemma \ref{le:wt x}] Since $\wt x\in\dom\big(\phi_{\wt H}^1\big)$, equality (\ref{eq:d pr 1}) in Lemma \ref{le:d pr 1} implies that for every $t\in[0,1]$, we have $x\in\dom(\phi_H^t)$ and
\begin{equation}\label{eq:phi H t x}\phi_H^t(x)=\pr_1\circ\phi_{\wt H}^t(\wt x).\end{equation}
We denote
\[y:[0,1]\to N,\quad y(t):=\pr_2\circ\phi_{\wt H}^t(\wt x).\]
Since $\wt x\in\wt N$, we have
\[y(0)=\pr_2(\wt x)=\pr_1(\wt x)=x\in N.\]
By statement (\ref{eq:d pr 2}) in Lemma \ref{le:d pr 1}, we have
\[\dot y(t)\in T_{y(t)}N^\om,\quad\forall t\in[0,1].\]
It follows that 
\begin{equation}\label{eq:y leaf}y([0,1])\sub\textrm{ isotropic leaf through }x.
\end{equation}
Equality (\ref{eq:phi H t x}) and our assumption that $\wt x\in(\phi_{\wt H}^1)^{-1}(\wt N)$ imply that 
\[\phi_H^1(x)=\pr_1\circ\phi_{\wt H}^1(\wt x)=\pr_2\circ\phi_{\wt H}^1(\wt x)=y(1).\]
Since $y(1)$ lies in the isotropic leaf of $x$, it follows that $x$ is a leafwise fixed point for $\phi_H^1$. This proves Lemma \ref{le:wt x}.
\end{proof}
For the proof of Theorem \ref{thm:leafwise} we also need the following. 

\begin{lemma}[nondegeneracy and transversality]\label{le:N phi} Let $U\sub M$ and $\wt U\sub\wt M$ be open and 
$\phi\in C^\infty\big([0,1]\x U,M\big)$, $\wt\phi\in C^\infty\big([0,1]\x\wt U,\wt M\big)$ be maps, such that
\begin{eqnarray}\label{eq:wt phi 0}&\wt\phi_0=\textrm{inclusion: }\wt U\to\wt M,&\\
\label{eq:pr 1 wt U sub U}&\pr_1(\wt U)\sub U,&\\
\label{eq:pr 1 wt phi t}&\pr_1\circ\wt\phi_t=\phi_t\circ\pr_1|_{\wt U},\quad\forall t\in[0,1],&\\
\label{eq:pr 2 wt phi t}&\pr_2\circ\wt\phi_t(\wt x)\in\textrm{ isotropic leaf through }\pr_2(\wt x),\,\forall\wt x\in\wt U,\,t\in[0,1].&
\end{eqnarray}
Let $\wt x\in\wt N\cap\wt\phi_1^{-1}(\wt N)$. 
We define 
\[y:[0,1]\to N,\quad y(t):=\pr_2\circ\wt\phi_t(\wt x).\]
The triple $\big(\phi_1,N,y\big)$ is nondegenerate if and only if
\[T_{\wt x}\wt N\cap d\wt\phi_1(\wt x)^{-1}\left(T_{\wt\phi_1(\wt x)}\wt N\right)=\{0\}.\]
\end{lemma}
\begin{Rmks} 
\begin{itemize}
\item Because of (\ref{eq:pr 1 wt U sub U}) the condition (\ref{eq:pr 1 wt phi t}) makes sense.
\item It follows from (\ref{eq:wt phi 0}) that
\[y(0)=\pr_2\circ\wt\phi_0(\wt x)=\pr_2(\wt x).\]
Since $\wt x\in\wt N$, it follows that
\begin{equation}\label{eq:wt x y 0}\wt x=\big(y(0),y(0)\big).\end{equation}
Since $\wt\phi_1(\wt x)\in\wt N$, we have $\wt\phi_1(\wt x)=\big(y(1),y(1)\big)$, and therefore by (\ref{eq:pr 1 wt phi t},\ref{eq:wt x y 0}),
\begin{equation}\label{eq:y 1 phi 1 y 0}y(1)=\pr_1\circ\wt\phi_1(\wt x)=\phi_1\circ\pr_1(\wt x)=\phi_1\circ y(0).\end{equation}
Hence using (\ref{eq:pr 2 wt phi t}), the nondegeneracy condition for $(\phi_1,N,y)$ makes sense.

\item The condition (\ref{eq:pr 1 wt phi t}) implies that
\begin{equation}\label{eq:d pr 1 d wt phi 1}d\pr_1d\wt\phi_1(\wt x)=d\phi_1d\pr_1(\wt x)\quad\textrm{on}\quad T_{\wt x}\wt M.\end{equation}
\end{itemize}
\end{Rmks}
\begin{proof}[Proof of Lemma \ref{le:N phi}]\label{proof:le:N phi} \setcounter{claim}{0} We denote
\[x:=\pr_1(\wt x)=y(0),\quad\hol_y:=\hol^{N,\om}_y,\]
by
\[\pr:TN\to TN/(TN)^\om\]
the canonical projection, and
\[X:=\Big\{v\in T_xN\cap d\phi_1(x)^{-1}\left(T_{\phi_1(x)}N\right)\,\Big|\,\hol_y\pr v=\pr d\phi_1(x)v\Big\}.\]
The triple $\big(\phi_1,N,y\big)$ is nondegenerate if and only if $X=\{0\}$. The statement of Lemma \ref{le:N phi} therefore follows from the next claim.
\begin{claim}\label{claim:d pr 1 wt x} The map
\begin{equation}\label{eq:d pr 1 wt x}d\pr_1(\wt x):T_{\wt x}\wt N\cap d\wt\phi_1(\wt x)^{-1}\left(T_{\wt\phi_1(\wt x)}\wt N\right)\to X\end{equation}
is well-defined and bijective.
\end{claim}
\begin{pf}[Proof of Claim \ref{claim:d pr 1 wt x}] Since $\wt N=\big\{(y_0,y_0)\,\big|\,y_0\in N\big\}$, the maps
\[d\pr_1(\wt x):T_{\wt x}\wt N\to T_xN,\quad\iota:T_xN\to T_{\wt x}\wt N,\,\iota(v)=(v,v),\]
are well-defined and inverses of each other. Let
\[\wt v\in T_{\wt x}\wt N.\]
We define
\[v:=d\pr_1(\wt x)\wt v\in T_xN,\quad\wt w:=d\wt\phi_1(\wt x)\wt v.\]
In order to prove Claim \ref{claim:d pr 1 wt x}, it suffices to show that
\begin{equation}\label{eq:d wt phi 1 wt x wt v}\wt w\in T_{\wt\phi_1(\wt x)}\wt N\iff d\phi_1(x)v\in T_{\phi_1(x)}N,\,\hol_y\pr v=\pr d\phi_1(x)v.
\end{equation}
For this we need the following claim. We write
\[w_i:=d\pr_i\big(\wt\phi_1(\wt x)\big)\wt w.\]
\begin{claim}\label{claim:hol} We have
\[\hol_y\pr v=\pr w_2.\]
\end{claim}
\begin{proof}[Proof of Claim \ref{claim:hol}] Since $\wt x=(x,x)\in\wt\phi_1^{-1}(\wt N)\sub\wt U$, there exists a path $z\in C^\infty(\R,N)$, such that
\[z(0)=x,\quad\dot z(0)=v,\quad(z,z)(\R)\sub\wt U.\]
We define
\[u:\R\x[0,1]\to N,\quad u(s,t):=\pr_2\circ\wt\phi_t\circ(z,z)(s).\]
We have
\[\dd_su(0,0)=\dot z(0)=v.\]
It follows from (\ref{eq:pr 2 wt phi t}) that $u(s,t)$ lies in the isotropic leaf through $z(s)$, for every $s,t$. Therefore, by the definition of the linear holonomy, we have 
\begin{align*}\hol_y\pr v&=\pr\dd_su(0,1)\\
&=\pr d\pr_2\big(\wt\phi_1(\wt x)\big)d\wt\phi_1(x,x)(v,v)\\
&=\pr w_2.
\end{align*}
This proves Claim \ref{claim:hol}.
\end{proof}
The equalities (\ref{eq:d pr 1 d wt phi 1}) and $v=d\pr_1(\wt x)\wt v$ imply that 
\begin{equation}\label{eq:w 1}w_1=d\pr_1\big(\wt\phi_1(\wt x)\big)\wt w=d\phi_1(x)v.\end{equation}
We show the \textbf{implication ``$\then$'' in (\ref{eq:d wt phi 1 wt x wt v})}. Assume that $\wt w\in T_{\wt\phi_1(\wt x)}\wt N$. Then we have
\begin{equation}\label{eq:d pr 2 d wt phi 1}w_2=w_1\in T_{\pr_1\circ\wt\phi_1(\wt x)}N=T_{\phi_1(x)}N.\end{equation}
Using (\ref{eq:w 1}), it follows that
\[d\phi_1(x)v\in T_{\phi_1(x)}N.\]
Combining Claim \ref{claim:hol} and (\ref{eq:d pr 2 d wt phi 1},\ref{eq:w 1}), 
we obtain
\[\hol_y\pr v=\pr d\phi_1(x)v.\]
This shows the implication ``$\then$'' in (\ref{eq:d wt phi 1 wt x wt v}).\\

We show the \textbf{implication ``$\follows$''}. Assume that
\[d\phi_1(x)v\in T_{\phi_1(x)}N,\quad\hol_y\pr v=\pr d\phi_1(x)v.\]
By (\ref{eq:w 1}) it follows that
\begin{equation}\label{eq:pr d pr 1 wt x}\pr w_1=\pr d\phi_1(x)v=\hol_y\pr v.\end{equation}
Using Claim \ref{claim:hol}, it follows that
\begin{equation}\label{eq:pr w 1}\pr w_1=\pr w_2.\end{equation}
Therefore, for every $\wt w'=(w',w')\in T_{\wt\phi_1(\wt x)}\wt N$, we have
\[\wt\om(\wt w,\wt w')=\om(w_1,w')-\om(w_2,w')=\om\big(w_1-w_2,w'\big)=0.\]
Here we used (\ref{eq:pr w 1}) in the last equality. It follows that
\[\wt w=(w_1,w_2)\in T_{\wt\phi_1(\wt x)}\wt N^{\wt\om}=T_{\wt\phi_1(\wt x)}\wt N.\]
Here we used that $\wt N=\big\{(x,x)\,\big|\,x\in N\big\}$ is a Lagrangian submanifold of $\wt M$. This proves the implication ``$\follows$'' in (\ref{eq:d wt phi 1 wt x wt v}). This completes the proof of (\ref{eq:d wt phi 1 wt x wt v}) and therefore of Claim \ref{claim:d pr 1 wt x} and of Lemma \ref{le:N phi}.
\end{pf}
\end{proof}
The proof of part (\ref{thm:leafwise:dense}) of the statement of Theorem \ref{thm:leafwise} is based on the following result.

\begin{prop}\label{prop:transv} Let $A,X,Z$ be (smooth) manifolds (without boundary), $Y\sub Z$ a submanifold, and $f\in C^\infty(A\x X,Z)$, be a map, such that $f\pitchfork Y$, i.e., $f$ is transverse to $Y$. Then the set
\[\big\{a\in A\,\big|\,f(a,\cdot)\pitchfork Y\big\}\] 
is residual in $A$, i.e., it contains a countable intersection of open and dense sets.
\end{prop} 
\begin{proof} \cite[2.7.~Theorem]{Hi}.
\end{proof}
For the proof of part (\ref{thm:leafwise:dense}) of Theorem \ref{thm:leafwise} we also need the following lemma and remark. Let $(M,\om)$ be a symplectic manifold. 


\begin{lemma}\label{le:H} Let $K\sub M$ be compact and $K'\sub M$ be a compact neighbourhood of $K$. There exists $\ell\in\N$ and a function $H\in C^\infty\big(B^\ell_1\x M,\R\big)$, such that for every $a\in B^\ell_1$ (the open unit ball in $\R^\ell$), the function $H_a:=H(a,\cdot)$ has support in $K'$, 
the map $a\mapsto H_a$ is the restriction of a linear map to $B^\ell_1$, 
\begin{eqnarray}\label{eq:surj}&d\big(a\mapsto\phi_{H_a}^1(x)\big)(a):T_aB^\ell_1=\R^\ell\to T_{\phi_{H_a}^1(x)}M\textrm{ is surjective,}&\\
\nn&\forall a\in B^\ell_1,\,x\in K,&\\
\label{eq:H 0}&\textrm{and }H_0\const0.&
\end{eqnarray}
\end{lemma}


\begin{proof}[Proof of Lemma \ref{le:H}]\setcounter{claim}{0}  Let $x\in M$. We denote $2n:=\dim M$.

\begin{claim}\label{claim:F x} There exists a linear map
\[F^x:\R^{2n}\to C^\infty(M,\R),\]
such that $\R^{2n}\ni w\mapsto X_{F^x(w)}(x)\in T_xM$ is bijective, and $F^x(w)$ has support in $K'$, for every $w\in\R^{2n}$. 
\end{claim}

\begin{proof}[Proof of Claim \ref{claim:F x}] Let $v_1,\ldots,v_{2n}$ be a basis of $T_xM$. Let $i\in\{1,\ldots,2n\}$. It follows from an argument in a Darboux chart that there exists a function $F_i\in C^\infty(M,\R)$ with support in $K'$, such that
\[X_{F_i}(x)=v_i.\]
We define
\[F^x:\R^{2n}\to C^\infty(M,\R),\quad F^x(w)(y):=\sum_{i=1}^{2n}w^iF_i(y),\]
where $w^i$ denotes the $i$-th coordinate of $w$ \wrt the standard basis. This map has the required properties. This proves Claim \ref{claim:F x}.
\end{proof}
We choose $F^x$ as in this claim and write $F^x_w:=F^x(w)$. The set
\begin{equation}\label{eq:U x}U_x:=\big\{y\in M\,\big|\,\R^{2n}\ni w\mapsto X_{F^x_w}(y)\in T_yM\textrm{ is bijective}\big\}\end{equation}
is open and contains $x$. Since $K$ is compact, there exists a finite set $S\sub K$, such that
\[\bigcup_{x\in S}U_x=K.\]
We define
\[F:(\R^{2n})^S\to C^\infty(M,\R),\quad F_a:=F(a):=\sum_{x\in S}F^x_{a(x)}.\]
\begin{claim}\label{claim:d} For every $y\in K$ the map
\[d\big(a\mapsto\phi_{F_a}^1(y)\big)(0):T_0(\R^{2n})^S=(\R^{2n})^S\to T_yM\]
is surjective.
\end{claim}
\begin{proof}[Proof of Claim \ref{claim:d}] Let $v\in T_yM$. We choose $x\in K$ such that $y\in U_x$. By (\ref{eq:U x}) there exists $w\in\R^{2n}$, such that
\begin{equation}\label{eq:X F x w}X_{F^x_w}(y)=v.\end{equation}
We define
\[\hhat a:S\to\R^{2n},\quad\hhat a(x'):=\left\{\begin{array}{ll}
w,&\textrm{if }x'=x,\\
0,&\textrm{otherwise.}
\end{array}\right.\]
Let $t\in[0,1]$. We have $F_{t\hhat a}=tF_{\hhat a}$, hence $X_{F_{t\hhat a}}=tX_{F_{\hhat a}}$, and therefore,
\[\phi_{F_{t\hhat a}}^1=\phi_{F_{\hhat a}}^t.\]
Furthermore, we have
\[F_{\hhat a}=\sum_{x'\in S}F_{a(x')}^{x'}=F^x_w.\]
It follows that
%
%
\begin{align*}d\big(a\mapsto\phi_{F_a}^1(y)\big)(0)\hhat a&=\left.\frac d{dt}\right|_{t=0}\phi_{F_{t\hhat a}}^1(y)\\
&=\left.\frac d{dt}\right|_{t=0}\phi_{F_{\hhat a}}^t(y)\\
&=X_{F_{\hhat a}}(y)\\
&=X_{F^x_w}(y)\\
&=v.
\end{align*}
Here in the last step we used (\ref{eq:X F x w}). This proves Claim \ref{claim:d}.
\end{proof}

We define $\ell:=2n|S|$, choose a linear isomorphism $T:\R^\ell\to(\R^{2n})^S$, and define
\[H:\R^\ell\x M\to\R,\quad H(a,y):=\big(F\circ T(a)\big)(y).\]
By Claim \ref{claim:d}, restricting $H$ to the product of a ball in $\R^\ell$ and $M$, and rescaling, we may assume \Wlog that (\ref{eq:surj}) holds. This function has the required properties. This proves Lemma \ref{le:H}.
\end{proof}
\begin{rmk}\label{rmk:hol} Let $(M,\F)$ be a foliated manifold, $F$ a leaf of $\F$, and $x,y\in C^\infty([0,1],F)$, such that $x(i)=y(i)$, for $i=0,1$. Assume that there exists a surjective foliation chart $U\sub M\to\R^\ell\x\R^k$, such that $x([0,1]),y([0,1])\sub U$. Then 
\[\hol^\F_x=\hol^\F_y.\]
To see this, consider the case $M=\R^\ell\x\R^k$ with the standard foliation. Then we have
\[\hol^\F_x=\id:\nu_{x(0)}\F=\R^\ell\x\R^k/\{0\}\x\R^k\to\nu_{x(1)}\F=\R^\ell\x\R^k/\{0\}\x\R^k,\]
hence the statement holds. The general situation can be reduced to this case.
\end{rmk}
We are now ready for the proof of the main result.
\begin{proof}[Proof of Theorem \ref{thm:leafwise}]\label{proof:thm:leafwise}\setcounter{claim}{0} We choose a submanifold $\wt M$ as in Lemma \ref{le:wt M}. Shrinking $\wt M$ if necessary, by Weinstein's neighbourhood theorem we may assume \Wlog that there exists a symplectomorphism between $\wt M$ and an open neighbourhood of the zero section in $T^*\wt N$, that is the identity on $\wt N$. 

In order to construct the desired $C^0$-neighbourhood of the inclusion $N\to M$, we need to ensure that the lifted Hamiltonian flow $\phi_{\wt H}^t$ is defined on the subset $\wt N$ of $\wt M$. For this we need to control $d\pr_2X_{\wt H_t}$, where $\pr_2:\wt M\sub M\x N\to N$ denotes the projection. By Lemma \ref{le:d pr 1} $d\pr_2X_{\wt H_t}$ lies in the isotropic distribution. Using foliation charts, we obtain local charts for $\wt M$ in which this ``isotropic part'' of the lifted Hamiltonian vector field disappears. (See (\ref{eq:d wt phi}) in the proof of Claim \ref{claim:phi wt H} below.) This will make the lifted Hamiltonian flow well-defined on $\wt N$. 

To construct these charts, we need the following. We denote by
\[\pi:\R^{2m}\x\R^k\to\R^{2m}\]
the canonical projection onto the first factor. Let
\[y\in N.\]
%
%
\begin{claim}\label{claim:V y} There exists an open neighbourhood $V$ of $y$ in $N$ and a surjective 
foliation chart $\psi:V\to\R^{2m}\x\R^k$, satisfying
\begin{equation}\label{eq:z z'}(z,z')\in\wt M\cap(V\x V)\textrm{ such that }\pi\circ\psi(z)=\pi\circ\psi(z')\then z=z'.
\end{equation}
\end{claim}
\begin{proof}[Proof of Claim \ref{claim:V y}] We choose an open neighbourhood $V_0$ of $y$ in $N$ and a foliation chart $\psi_0:V_0\to\R^{2m}\x\R^k$, satisfying $\psi_0(y)=0$. For every subset $V\sub V_0$ we define
\[\wt X_V:=(\psi_0\x\psi_0)\big(\wt M\cap(V\x V)\big)\sub\R^{4m+2k}.\]
We define the map
\[f:\wt X_{V_0}\x\R^k\to\R^{4m+k}\x\R^k,\quad f(\wt x,w):=\wt x+(0,w).\]
We show that $\wt X_{V_0}$ is a submanifold of $\R^{4m+2k}$ and that $f$ is an immersion. Let $\wt x=(z,z')\in\wt M\cap(V_0\x V_0)$. Since $\wt M$ is a symplectic submanifold of $M\x N$, we have
\[T_{\wt x}\wt M\cap\left(\{0\}\x T_{z'}N^\om\right)=\{0\}.\]
This intersection is transverse, since
\[\dim(\wt M)+\dim\left(\{0\}\x T_{z'}N^\om\right)=\dim(M\x N).\] 
Hence we have $T_{\wt x}\wt M\pitchfork T_{\wt x}(N\x N)$. It follows that $\wt M\cap(N\x N)$ is a submanifold of $N\x N$, and that
\[T_{\wt x}\big(\wt M\cap(N\x N)\big)\cap\left(\{0\}\x T_{z'}N^\om\right)=\{0\}.\]
This implies that $\wt X_{V_0}$ is a submanifold of $\R^{4m+2k}$ and that 
\[T_{\left(\psi_0\x\psi_0\right)(\wt x)}\wt X_{V_0}\cap\left(\{0\}\x\R^k\right)=\{0\}.\]
It follows that $f$ is an immersion, as claimed.\\

Since $f$ is immersive, there exist neighbourhoods $V\sub V_0$ of $y$ in $N$ and $W$ of $0$ in $\R^k$, such that the restriction of $f$ to $\wt X_V\x W$ is injective. Shrinking $V$, we may assume \Wlog that
\[\psi_0(V)=B^{2m}_r\x B^k_r,\quad B^k_{2r}\sub W,\]
for some $r>0$. Here $B^k_r$ denotes the open ball in $\R^k$ about 0 with radius $r$. For every $\ell=0,1,\ldots$ we choose a diffeomorphism $\chi_\ell:B^\ell_r\to\R^\ell$. We define
\[\psi:=(\chi_{2m}\x\chi_k)\circ\psi_0|_V:V\to\R^{2m}\x\R^k.\]
This is a surjective foliation chart. We show that (\ref{eq:z z'}) holds. Let $(z,z')\in\wt M\cap(V\x V)$ be such that $\pi\circ\psi(z)=\pi\circ\psi(z')$. We denote by $\pi':\R^{2m}\x\R^k\to\R^k$ the canonical projection onto the second factor. We define
\[w:=\pi'\big(\psi_0(z')-\psi_0(z)\big).\]
Since $\pi'\circ\psi_0(z),\pi'\circ\psi_0(z')\in B^k_r$, we have $w\in B^k_{2r}$. The equality $\pi\circ\psi(z)=\pi\circ\psi(z')$ implies that $\pi\circ\psi_0(z)=\pi\circ\psi_0(z')$. Hence we have
\begin{equation}\label{eq:psi 0 z}\big(\psi_0(z),\psi_0(z)\big)+(0,w)=\big(\psi_0(z),\psi_0(z')\big),\end{equation}
where $0\in\R^{2m+k}\x\R^{2m}$. Since $(z,z)\in V\x V\sub\wt N\sub\wt M$, we have $\big(\psi_0(z),\psi_0(z)\big)\in\wt X_V$. Furthermore, we have $\big(\psi_0(z),\psi_0(z')\big)\in\wt X_V$. Since the restriction of $f$ to $\wt X_V\x B^k_{2r}$ is injective, using (\ref{eq:psi 0 z}), it follows that $\psi_0(z)=\psi_0(z')$ (and $w=0$), hence $z=z'$. Hence (\ref{eq:z z'}) holds. Therefore, $\psi$ has the desired properties. This proves Claim \ref{claim:V y}.
\end{proof}
We choose $V=V_y$ and $\psi=\psi_y$ as in Claim \ref{claim:V y}. We define
\begin{eqnarray}\label{eq:wt phi}&\wt\psi_y:=\id_M\x(\pi\circ\psi_y):\wt M\cap(M\x V_y)\to M\x\R^{2m},&\\
\nn&\wt x:=(y,y).&
\end{eqnarray}
\begin{claim}\label{claim:d wt phi} The derivative
\[\wt\Psi:=d\wt\psi_y(\wt x):T_{\wt x}\wt M\to T_yM\x\R^{2m}\]
is invertible.
\end{claim}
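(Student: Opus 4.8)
The plan is to obtain invertibility of $\wt\Psi$ from a dimension count together with injectivity. Both the source $T_{\wt x}\wt M$ and the target $T_yM\x\R^{2m}$ have dimension $2n+2m$ — the former by Lemma~\ref{le:wt M} — so it suffices to show $\ker\wt\Psi=\{0\}$.

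First I would identify the kernel explicitly. Since $\wt M\sub M\x N$ we have $T_{\wt x}\wt M\sub T_{\wt x}(M\x N)=T_yM\x T_yN$, and since $\wt\psi=\id_M\x(\pi\circ\psi)$ its derivative at $\wt x=(y,y)$ acts by $\wt\Psi(v,w)=\big(v,\,d(\pi\circ\psi)(y)w\big)$. Hence a vector $(v,w)\in T_{\wt x}\wt M$ lies in $\ker\wt\Psi$ iff $v=0$ and $w\in\ker\big(d(\pi\circ\psi)(y)\big)$. Because $\psi$ is a foliation chart for the isotropic foliation of $N$ and $\pi$ forgets the leaf coordinate, $\pi\circ\psi$ is a submersion whose fibres are the plaques of that foliation, so $\ker\big(d(\pi\circ\psi)(y)\big)=T_yN^\om$. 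Therefore $\ker\wt\Psi=T_{\wt x}\wt M\cap\big(\{0\}\x T_yN^\om\big)$.

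Next I would show this intersection is trivial, by the same linear algebra used in the proof of Lemma~\ref{le:wt M}. Writing $\wt V:=T_{\wt x}(M\x N)$ and $\wt\Om:=\big(\om\oplus(-\iota_N^*\om)\big)_{\wt x}$, nondegeneracy of $\om$ on $T_yM$ together with $\ker\big(\iota_N^*\om|_{T_yN}\big)=T_yN^\om$ gives $\wt V^{\wt\Om}=\{0\}\x T_yN^\om$. On the other hand, since $\wt M$ is a symplectic submanifold of $\big(M\x M,\om\oplus(-\om)\big)$ contained in $M\x N$ and $\wt x\in\wt N$, the form $\wt\Om$ restricts nondegenerately to $T_{\wt x}\wt M$, whence $T_{\wt x}\wt M\cap\wt V^{\wt\Om}=\{0\}$ (compare (\ref{eq:wt W cap})). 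Combining this with the description of $\ker\wt\Psi$ above yields $\ker\wt\Psi=\{0\}$, and invertibility of $\wt\Psi$ follows.

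The only point that is not routine bookkeeping — and the one I would be most careful to spell out — is the identification $\ker\big(d(\pi\circ\psi)(y)\big)=T_yN^\om$: here one must use that the chart $\psi$ is adapted to the isotropic foliation $TN^\om$ (whose leaves have dimension $k=\codim N$), so that $\pi\circ\psi$ collapses exactly the isotropic directions at every point of $V$, and in particular at $y$. Everything else simply reuses the splitting $\wt V=T_{\wt x}\wt M\oplus\wt V^{\wt\Om}$ established in Lemma~\ref{le:wt M}.
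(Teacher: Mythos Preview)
Your argument is correct. The paper takes a slightly different, more conceptual route: it equips the target $T_yM\times\R^{2m}$ with the symplectic form $\om_y\oplus\sigma$, where $\sigma$ is the unique linear symplectic form on $\R^{2m}$ pulling back to $(\iota_N^*\om)_y$ under $\Psi:=d(\pi\circ\psi)(y)$ (this exists precisely because $\psi$ is a foliation chart, so $\ker\Psi=T_yN^\om$); then $\wt\Psi=\id\times\Psi$ is a linear symplectic map between symplectic vector spaces of equal dimension $2n+2m$, hence an isomorphism. Your proof instead stays on the source side: you identify $\ker\wt\Psi$ with $T_{\wt x}\wt M\cap\big(\{0\}\times T_yN^\om\big)=T_{\wt x}\wt M\cap\wt V^{\wt\Om}$ and kill it using the symplecticity of $T_{\wt x}\wt M$ in $\wt V$, which is exactly the transversality (\ref{eq:wt W cap}) already established in Lemma~\ref{le:wt M}. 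The two arguments are equivalent at the linear-algebra level---the paper's ``symplectic $\Rightarrow$ injective'' unwinds to your kernel computation---but yours has the virtue of recycling (\ref{eq:wt W cap}) directly rather than introducing an auxiliary symplectic structure on the target.
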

\begin{proof}[Proof of Claim \ref{claim:d wt phi}] We denote
\[\Psi:=d\pi d\psi_y(y):T_yN\to\R^{2m}.\]
We have
\[\wt\Psi=\id\x\Psi:T_{\wt x}\wt M\sub T_yM\x T_yN\to T_yM\x\R^{2m}.\]
Let $\wt v=(v,v')\in\ker\wt\Psi$. Then $v=0$ and $v'\in\ker\Psi=T_yN^\om$, hence $\wt v=(0,v')\in T_{\wt x}\wt M^{\wt\om}$. Since $\wt M$ is symplectic, it follows that $\wt v=0$. Therefore $\wt\Psi$ is injective. Since the domain and target of $\wt\Psi$ both have dimension $2n+2m$, it follows that $\wt\Psi$ is an isomorphism. This proves Claim \ref{claim:d wt phi}.
\end{proof}
Using Claim \ref{claim:d wt phi}, by the Inverse Function Theorem there exist open neighbourhoods $U_y$ of $y$ in $M$ and $W_y\sub\R^{2m}$ of $\pi\circ\psi_y(y)$, such that the restriction
\begin{equation}\label{eq:wt psi wt U}\wt\psi_y:\wt U_y:=\wt\psi_y^{-1}(U_y\x W_y)\to U_y\x W_y\end{equation}
is a diffeomorphism. (Note that
\[\wt U_y=\big(U_y\x(\pi\circ\psi_y)^{-1}(W_y)\big)\cap\wt M.)\]
The map $\wt\psi_y$ plays the role of a local ``chart'' for $\wt M$. (The target of this map is an open subset of $M\x\R^{2m}$, hence $\wt\psi_y$ is not a chart in the strict sense.) We will show that it has the desired property, see (\ref{eq:d wt phi}) below.

Shrinking $U_y$, we may assume \Wlog that $U_y\cap N\sub V_y$. 
We choose a compact neighbourhood $K_y$ of $y$ in $U_y\cap N$, such that $\pi\circ\psi_y(K_y)\sub W_y$. 
The last condition makes sense, since $U_y\cap N\sub V_y$ and $\psi_y$ is defined on $V_y$. Since $N$ is compact, there exist $\ell\in\N$ and points $y_1,\ldots,y_\ell\in N$, such that 
\begin{equation}\label{eq:N sub bigcup}N\sub\bigcup_{i=1,\ldots,\ell}\Int K_{y_i}.\end{equation}
(Here $\Int X$ denotes the interior of $X$ in $N$.) We denote
\begin{eqnarray*}&K_i:=K_{y_i},\quad V_i:=V_{y_i},\quad U_i:=U_{y_i},\quad W_i:=W_{y_i},&\\
&\wt U_i:=\wt U_{y_i},\quad\psi_i:=\psi_{y_i},\quad\wt\psi_i:=\wt\psi_{y_i}.&
\end{eqnarray*}
We define
\[\UU:=\big\{\phi\in C(N,M)\,\big|\,\phi(K_i)\sub U_i,\,\forall i=1,\ldots,\ell\big\}.\]
This is a neighbourhood of the inclusion $\iota_N:N\to M$ in the compact open topology (= $C^0$-topology). We define
\begin{equation}\label{eq:OO}\OO:=\bigcup_{i=1,\ldots,\ell}\left(\Int K_i\wo\bigcup_{j=1,\ldots,i-1}\Int K_j\right)\x V_i.
\end{equation}
Here we use the convention that for $i=1$ the union $\bigcup_{j=1,\ldots,i-1}X_j$ is empty. 
\begin{claim}\label{claim:OO} The set $\OO$ is a (possibly nonopen) neighbourhood of the diagonal $\wt N$ in $N\x N$.
\end{claim}
\begin{proof}[Proof of Claim \ref{claim:OO}] Let $y\in N$. Let $i_0$ be the smallest index, such that $y\in\Int K_{i_0}$. (Here we use (\ref{eq:N sub bigcup}).) We define
\[V':=\Int K_{i_0}\wo\bigcup_{j\in\{1,\ldots,\ell\}:\,y\not\in K_j}K_j,\quad V'':=\bigcap_{i\in\{1,\ldots,\ell\}:\,y\in K_i}V_i.\]
$V'$ and $V''$ are open neighbourhoods of $y$ in $N$. (Here we use that $K_i\sub U_i\cap N\sub V_i$.) It follows that $V'\x V''$ is an open neighbourhood of $(y,y)$ in $N\x N$. Applying Remark \ref{rmk:X Y} from the appendix with $J:=\{1,\ldots,\ell\}$, $I:=\big\{i\in J\,\big|\,y\in K_i\big\}$, $A_i:=\Int K_i$, and $B_i:=K_i$, it follows that $V'\x V''\sub\OO$. The statement of Claim \ref{claim:OO} follows.
\end{proof}

\textbf{We show that $\UU$ and $\OO$ satisfy condition (\ref{thm:leafwise:Fix}) of Theorem \ref{thm:leafwise}.} The next claim will be used to show that given a Hamiltonian flow on $M$ as in (\ref{thm:leafwise:Fix}), $\wt N$ intersects its translate under the lifted flow on $\wt M$. 

Let $H\in C^\infty\big([0,1]\x M,\R\big)$ be a function whose Hamiltonian flow is globally defined, surjective, and satisfies $\phi^t_H|_N\in\UU$, for every $t\in[0,1]$. We define
\[\wt H_t:=H_t\circ\pr_1:\wt M\to\R.\]
Let $\wt x_0=(y_0,y_0)\in\wt N$.
\begin{claim}\label{claim:phi wt H} We have
\[\wt x_0\in\dom\big(\phi_{\wt H}^t\big),\quad\phi_{\wt H}^t(\wt x_0)\in\wt U_i,\]
for every $t\in[0,1]$ and every $i\in\{1,\ldots,\ell\}$, such that $y_0\in\Int K_i$. 
%
%
\end{claim}
\begin{proof}[Proof of Claim \ref{claim:phi wt H}] Let $i$ be such that $y_0\in\Int K_i$. We abbreviate
\begin{eqnarray*}&K:=K_i,\quad V:=V_i,\quad U:=U_i,\quad W:=W_i,&\\
&\wt U:=\wt U_i,\quad\psi:=\psi_i,\quad\wt\psi:=\wt\psi_i.&
\end{eqnarray*}
Recall that $\pi:\R^{2m}\x\R^k\to\R^{2m}$ denotes the canonical projection. Let $t\in[0,1]$ and $\wt x=(x,y)\in\wt U$. Since $\psi$ is a foliation chart, the derivative $d(\pi\circ\psi)(y)=d\pi(\psi(y))d\psi(y)$ vanishes on $T_yN^\om$. Hence statement (\ref{eq:d pr 2}) in Lemma \ref{le:d pr 1} implies that
\[d\big(\pi\circ\psi\circ\pr_2\big)X_{\wt H_t}(\wt x)=d(\pi\circ\psi)(y)d\pr_2(\wt x)X_{\wt H_t}(\wt x)=0.\]
Using equality (\ref{eq:d pr 1}) in Lemma \ref{le:d pr 1}, it follows that
\begin{equation}\label{eq:d wt phi}d\wt\psi(\wt x)X_{\wt H_t}(\wt x)=\big(X_{H_t}(x),0\big).\end{equation}
By (\ref{eq:wt psi wt U}) the map $\wt\psi:\wt U\to U\x W$ is a diffeomorphism. Hence (\ref{eq:d wt phi}) implies that
\begin{align*}\wt\psi_*X_{\wt H_t}&:=\wt\psi_*X_{\wt H_t|\wt U}\\
&=X_{H_t|U}\x0:\textrm{ vector field on }\wt\psi(\wt U)\\
&=U\x W.
\end{align*}
It follows that 
\[\textrm{domain of flow of }\wt\psi_*X_{\wt H}=\big(\wt\psi_*X_{\wt H_t}\big)_t=\left(\textrm{domain of flow of }\left(X_{H_t|U}\right)_t\right)\x W,\]
and that
\[\phi_{\wt\psi_*X_{\wt H}}^t=\phi_{H|U}^t\x\id\]
on this domain. Since by assumption $\phi^t_H|_N\in\UU$ and $y_0\in K$, this point lies in the domain of the flow of $\left(X_{H_t}|U\right)_t$. Our assumption $\pi\circ\psi(K)\sub W$ implies that $\pi\circ\psi(y_0)\in W$. It follows that
\[\wt\psi\big(\wt x_0=(y_0,y_0)\big)=\big(y_0,\pi\circ\psi(y_0)\big)\]
lies in the domain of the flow of $\wt\psi_*X_{\wt H}$. This implies that $\wt x_0$ lies in the domain of the flow of $\big(X_{\wt H_t|\wt U}\big)_t$. The statement of Claim \ref{claim:phi wt H} follows.
\end{proof}
Since $\wt N$ is compact, using Claim \ref{claim:phi wt H}, there exists a compact neighbourhood $\wt K$ of $\wt N$ in $\wt M$ that is contained in the domain of $\phi_{\wt H}^1$. By Lemma \ref{le:compact support} below there exists a function $\hhat H\in C^\infty\big([0,1]\x\wt M,\R\big)$ with compact support, such that
\[\phi_{\hhat H}^t=\phi_{\wt H}^t\textrm{ on }\wt K,\quad\forall t\in[0,1].\]
We denote by $0_{\wt N}$ the zero-section of $T^*\wt N$. By our assumption there exists an open neighbourhood $U'\sub T^*\wt N$ of $0_{\wt N}$ and a symplectomorphism $\chi:U'\to\wt M$ that equals the identity on $\wt N$. (Recall that using Weinstein's neighbourhood theorem, we have shrunk $\wt M$ in such a way that such a $\chi$ exists.) We define the function $H'_t:T^*\wt N\to\R$ by
\[H'_t:=\left\{\begin{array}{ll}
\hhat H_t\circ\chi&\textrm{on }U',\\
0&\textrm{outside }U'.
\end{array}\right.\]
We have
\[\chi\big(0_{\wt N}\cap(\phi_{H'}^1)^{-1}(0_{\wt N})\big)=\wt N\cap(\phi_{\hhat H}^1)^{-1}(\wt N)=\wt N\cap(\phi_{\wt H}^1)^{-1}(\wt N).\]
By Lemma \ref{le:wt x} we have
\[\pr_1\big(\wt N\cap(\phi_{\wt H}^1)^{-1}(\wt N)\big)\sub\Fix(\phi_H^1,N).\]
Since $\chi$ and $\pr_1:\wt N\to M$ are injective, it follows that
\begin{equation}\label{eq:Fix}\big|\Fix(\phi_H^1,N)\big|\geq\big|0_{\wt N}\cap(\phi_{H'}^1)^{-1}(0_{\wt N})\big|.\end{equation}
It follows from \cite[Theorem 2]{HoLag} 
that 
\[\big|0_{\wt N}\cap(\phi_{H'}^1)^{-1}(0_{\wt N})\big|\geq\cl(\wt N)=\cl(N),\]
and therefore
\[\big|\Fix(\phi_H^1,N)\big|\geq\cl(N),\]
i.e., \textbf{the inequality (\ref{eq:cl N}) holds}.\\

Assume now that $(\phi^1_H,N)$ is $\OO$-nondegenerate. \textbf{We show that (\ref{eq:Fix phi 1 N}) holds.} Let
\[\wt x\in \wt N\cap(\phi_{\wt H}^1)^{-1}(\wt N).\]
We check the hypotheses of the implication $\then$ of Lemma \ref{le:N phi} with
\begin{eqnarray*}&U:=M,\quad\wt U:=\dom\big(\phi_{\wt H}^1\big),\quad\phi_t:=\phi_H^t,\quad\wt\phi_t:=\phi_{\wt H}^t,&\\
&y:[0,1]\to N,\quad y(t):=\pr_2\circ\phi_{\wt H}^t(\wt x).&
\end{eqnarray*}
The conditions (\ref{eq:pr 1 wt U sub U},\ref{eq:pr 1 wt phi t}) in this lemma follows from equality (\ref{eq:d pr 1}) in Lemma \ref{le:d pr 1}. The condition (\ref{eq:pr 2 wt phi t}) follows from statement (\ref{eq:d pr 2}) in Lemma \ref{le:d pr 1}. We denote by $i\in\{1,\ldots,\ell\}$ the smallest index, such that $y(0)=\pr_1(\wt x)\in\Int K_i$. Let $t\in[0,1]$. By Claim \ref{claim:phi wt H} we have
\[\phi_{\wt H}^t(\wt x)\in\wt U_i.\]
By (\ref{eq:wt psi wt U},\ref{eq:wt phi}) we have $\wt U_i\sub\textrm{domain of }\wt\psi_i\sub M\x V_i$. 
It follows that
\begin{equation}\label{eq:y V y i}y(t)\in\pr_2\big(\wt U_i\big)\sub V_i.\end{equation}
Since $i\in\{1,\ldots,\ell\}$ is the smallest index, such that $y(0)\in\Int K_i$, it follows that
\[\big(y(0),y(t)\big)\in\left(\Int K_i\wo\bigcup_{j=1,\ldots,i-1}\Int K_j\right)\x V_i\sub\OO.\]
Hence by our assumption that $(\phi^1_H,N)$ is $\OO$-nondegenerate, the triple $\big(\phi^1_H,N,y\big)$ is nondegenerate. Hence the hypotheses of the implication $\then$ of Lemma \ref{le:N phi} are satisfied. Applying this lemma, it follows that
\[T_{\wt x}\wt N\cap d\phi_{\wt H}^1(\wt x)^{-1}\big(T_{\phi_{\wt H}^1(\wt x)}\wt N\big)=\{0\}.\]
It follows that 
\[\wt N\pitchfork(\phi_{\wt H}^1)^{-1}(\wt N).\]
Since $\phi_{\hhat H}^t=\phi_{\wt H}^t$ on $\wt K$, the set $\wt K$ intersects the pre-images of $\wt N$ under $\phi_{\hhat H}^1$ and $\phi_{\wt H}^1$ in the same set. Since $\wt K$ is a neighbourhood of $\wt N$, it follows that 
\[\wt N\pitchfork(\phi_{\hhat H}^1)^{-1}(\wt N).\]
Since $0_{\wt N}=\chi^{-1}(\wt N)$ and $(\phi_{H'}^1)^{-1}(0_{\wt N})=\chi^{-1}\big((\phi_{\hhat H}^1)^{-1}(\wt N)\big)$, it follows that
\[0_{\wt N}\pitchfork(\phi_{H'}^1)^{-1}(0_{\wt N}).\]
Therefore, \cite[Theorem 1]{FlLag} implies that
\[\big|0_{\wt N}\cap(\phi_{H'}^1)^{-1}(0_{\wt N})\big|\geq\sum_{i=0}^{\dim 0_{\wt N}}b_i\left(0_{\wt N},\Z_2\right)=\sum_{i=0}^{\dim N}b_i(N,\Z_2).\]
Combining this with (\ref{eq:Fix}), the claimed inequality (\ref{eq:Fix phi 1 N}) follows. It follows that $\UU$ and $\OO$ satisfy condition (\ref{thm:leafwise:Fix}) of Theorem \ref{thm:leafwise}.\\

\textbf{We show that $\UU$ and $\OO$ satisfy condition (\ref{thm:leafwise:dense}) of Theorem \ref{thm:leafwise}.} We choose a compact neighbourhood $K'\sub M$ of $K:=N$, and $\ell$ and $H$ as in Lemma \ref{le:H}. Since $H_0\const0$, we have $\phi_{\wt H_0}^1=\id:\wt M\to\wt M$. Therefore, using compactness of $\wt N$, by restricting the function $H$ to the product of some ball in $\R^\ell$ and $M$, and rescaling, we may assume \Wlog that $\wt N\sub\dom(\phi_{\wt H_a}^1)$, for every $a\in B^\ell_1$. We define
\begin{eqnarray}\nn&A:=B^\ell_1,\quad X:=N,\quad Z:=\wt M,&\\
\label{eq:f}&f:A\x X\to Z,\,f(a,x):=\phi_{\wt H_a}^1(x,x),\quad Y:=\wt N.&
\end{eqnarray}
\begin{claim}\label{claim:df} For every 
$x\in N$ the map
\[df(0,x):\R^\ell\x T_xN\to T_{(x,x)}\wt M\]
is surjective.
%
%
\end{claim}

\begin{proof}[Proof of Claim \ref{claim:df}] We write $\wt x:=(x,x)$. Let $\wt v=(v,w)\in T_{\wt x}\wt M$. Since $(w,w)\in T_{\wt x}\wt N\sub T_{\wt x}\wt M$, we have
\begin{equation}\label{eq:v w}(v-w,0)=\wt v-(w,w)\in T_{\wt x}\wt M.\end{equation}
It follows from equality (\ref{eq:d pr 1}) in Lemma \ref{le:d pr 1} that $\pr_1\circ f(a,y)=\phi_{H_a}^1(y)$, for every $a\in B^\ell_1$, $y\in N$. Hence it follows from (\ref{eq:surj}) that there exists $\hhat a\in T_0B^\ell_1=\R^\ell$, such that
\begin{equation}\label{eq:d pr 1 d a f 0 x}d_a(\pr_1\circ f)(0,x)\hhat a=v-w.\end{equation}
(Here $d_a$ denotes the derivative \wrt $a$.) We denote
\[v_i:=d\pr_id_af(0,x)\hhat a=d_a(\pr_i\circ f)(0,x)\hhat a,\quad\forall i=1,2.\]
Since $d_af(0,x)\hhat a\in T_{\wt x}\wt M$, it follows from (\ref{eq:d pr 1 d a f 0 x},\ref{eq:v w}) that
\begin{equation}\label{eq:0}(0,v_2)=d_af(0,x)\hhat a-\big(v_1=v-w,0\big)\in T_{\wt x}\wt M.\end{equation}
Since $a\mapsto H_a$ is the restriction of a linear map to $B^\ell_1$, the same holds for the map $a\mapsto\wt H_a$. It follows that
\[\phi_{\wt H_{t\hhat a}}^1(x)=\phi_{t\wt H_{\hhat a}}^1(x)=\phi_{\wt H_{\hhat a}}^t(x)\]
(where defined), hence
\[\left.\frac d{dt}\right|_{t=0}\phi_{\wt H_{t\hhat a}}^1(x)=X_{\wt H_{\hhat a}}(x),\]
and therefore,
\[v_2=d\pr_2\left.\frac d{dt}\right|_{t=0}\phi_{\wt H_{t\hhat a}}^1(x)=d\pr_2X_{\wt H_{\hhat a}}(x).\]
Therefore statement (\ref{eq:d pr 2}) in Lemma \ref{le:d pr 1} implies that
\begin{equation}\label{eq:d pr 2 d a f 0 x}v_2\in T_xN^\om.\end{equation}
Since $\wt M$ is symplectic, we have 
$T_{\wt x}\wt M\cap\big(\{0\}\x T_xN^\om\big)=\{0\}$. 
Combining this with (\ref{eq:0},\ref{eq:d pr 2 d a f 0 x}), it follows that
\begin{equation}\label{eq:d pr 2 0}v_2=0.\end{equation}
Since $f(0,y)=(y,y)$, for every $y\in N$, we have $d_xf(0,x)w=(w,w)$. Combining this with (\ref{eq:d pr 1 d a f 0 x},\ref{eq:d pr 2 0}), it follows that
\[df(0,x)(\hhat a,w)=d_af(0,x)\hhat a+d_xf(0,x)w=(v-w,0)+(w,w)=\wt v.\]
Hence $df(0,x)$ is surjective. This proves Claim \ref{claim:df}.
\end{proof}

Using Claim \ref{claim:df}, by restricting the function $H$ to the product of some ball in $\R^\ell$ and $M$, and rescaling, we may assume \Wlog $df(a,x)$ is surjective for every $(a,x)\in B^\ell_1\x N$. 

Recall the definition (\ref{eq:Ham M om U}) of $\Ham(M,\om,\UU)$. Let $G\in C^\infty\big([0,1]\x M,\R\big)$ be a function whose Hamiltonian flow exists globally, such that $\phi_G^t|_N\in\UU$, for every $t\in[0,1]$. Let $\VV$ be a neighbourhood of
\[\phi_0:=\phi_G^1\]
in $\Ham(M,\om,\UU)$ in the strong $C^\infty$-topology.
\begin{claim}\label{claim:phi} There exists $\phi\in\VV$, such that $\phi$ is $\OO$-nondegenerate.
\end{claim}

\begin{proof}[Proof of Claim \ref{claim:phi}] We choose a neighbourhood $\VV'$ of $\phi_G^1$ in $C^\infty(M,M)$ in the strong $C^\infty$-topology, such that $\VV=\VV'\cap\Ham(M,\om,\UU)$. We define
\begin{equation}\label{eq:A 0}A_0:=\left\{a\in A=B^\ell_1\,\big|\,\phi_0\circ\phi_{H_a}^1\in\VV',\,\phi_{G\#H_a}^t\big|_N\in\UU,\,\forall t\in[0,1]\right\}.\end{equation}
Here 
\[(G\#H_a)_t:=\left\{\begin{array}{ll}
2H_a,&\textrm{for }t\in\left[0,\frac12\right],\\
2G_{2t-1},&\textrm{for }t\in\left(\frac12,1\right],
\end{array}\right.\]
%
%
Let $a\in A$. Since $H_a$ has support in the compact set $K'$, its Hamiltonian flow exists globally and is surjective for all times. It follows that the same is true for $G\#H_a$. Hence the restriction $\phi_{G\#H_a}^t\big|_N$ in the definition of $A_0$ makes sense. Furthermore, $\phi_{G\#H_a}^1=\phi_0\circ\phi_{H_a}^1\in\Ham(M,\om)$, and therefore
\begin{equation}\label{eq:A 0 VV}A_0\sub\left\{a\in A\,\big|\,\phi_0\circ\phi_{H_a}^1\in\VV
\right\}.\end{equation}
Since $\phi_{H_0\const0}^1=\id$, the set of all $a\in A$, such that $\phi_0\circ\phi_{H_a}^1\in\VV'$, is a neighbourhood of $0$. (Here we use that the solution of an ordinary differential equation depends smoothly on the initial data and given parameters, if the coefficients of the equation depend smoothly on the point in space and on the parameters.) Similarly, the set of all $a\in A$, such that $\phi_{G\#H_a}^t|_N\in\UU$, for every $t$, is a neighbourhood of $0$. Using (\ref{eq:A 0}), it follows that $A_0$ is a neighbourhood of $0$. 

Since $df(a,x)$ is surjective for every $(a,x)\in A\x(X=N)$, by Proposition \ref{prop:transv}, the set
\[A_1:=\left\{a\in A\,\big|\,f(a,\cdot)\pitchfork\left(\phi_{\wt G}^1\right)^{-1}(\wt N)\right\}\]
%
%
is residual. Here $\wt G$ is defined as in (\ref{eq:wt H}). By Baire's category theorem $A_1$ is therefore dense in $A$. It follows that there exists $a\in A_0\cap A_1$. We define
\[\phi:=\phi_0\circ\phi_{H_a}^1.\]
It follows from (\ref{eq:A 0 VV}) that $\phi\in\VV$. Claim \ref{claim:phi} now follows from the next claim.

\begin{claim}\label{claim:phi V}$\phi$ is $\OO$-nondegenerate. 
\end{claim}

\begin{pf}[Proof of Claim \ref{claim:phi V}] Let $F\sub N$ be an isotropic leaf and
\begin{equation}\label{eq:x F}x\in C^\infty([0,1],F)\end{equation}
a path, such that (\ref{eq:x 0 U phi x},\ref{eq:x 0 x t O}) hold with $U=\dom(\phi)$. We show that the triple $\big(\phi,N,x\big)$ is nondegenerate. We define
\begin{equation}\label{eq:y}y:[0,1]\to F,\quad y(t):=\pr_2\circ\phi_{\wt{G\#H_a}}^t\big(x(0),x(0)\big).\end{equation}
We show that this map is well-defined. Since $a\in A_0$, we have that $\phi_{G\#H_a}^t|_N\in\UU$, for every $t$. Hence by Claim \ref{claim:phi wt H} applied with $H$ replaced by $G\#H_a$, we have $\big(x(0),x(0)\big)\in\wt N\sub\dom\big(\phi_{\wt{G\# H_a}}^1\big)$. Hence the right hand side in (\ref{eq:y}) makes sense. It follows from statement (\ref{eq:d pr 2}) in Lemma \ref{le:d pr 1} that $y(t)\in F$, for every $t\in[0,1]$. Hence $y$ is well-defined. 
\begin{claim}\label{claim:phi N y} The triple $(\phi,N,y)$ is nondegenerate.
\end{claim}
\begin{proof}[Proof of Claim \ref{claim:phi N y}] We check the hypotheses of the implication ``$\follows$'' of Lemma \ref{le:N phi} with
\begin{eqnarray*}&U:=M,\quad\wt U:=\dom\left(\phi_{G\#H_a}^1\right),\quad\phi_t:=\phi_{G\#H_a}^t,\quad\wt\phi_t:=\phi_{\wt{G\#H_a}}^t,&\\
&\wt x:=\big(x(0),x(0)\big).&
\end{eqnarray*}
It follows from equality (\ref{eq:d pr 1}) in Lemma \ref{le:d pr 1} that the hypotheses (\ref{eq:pr 1 wt U sub U},\ref{eq:pr 1 wt phi t}) of Lemma \ref{le:N phi} are satisfied. By statement (\ref{eq:d pr 2}) in Lemma \ref{le:d pr 1} the hypothesis (\ref{eq:pr 2 wt phi t}) is satisfied. Since $a\in A_1$, we have
\begin{equation}\label{eq:f pitchfork}f(a,\cdot)\pitchfork\left(\phi_{\wt G}^1\right)^{-1}(\wt N).
\end{equation}
By (\ref{eq:f}) we have
\[f(a,\cdot)\circ\pr_1=\phi_{\wt H_a}^1:\wt N\to\wt M.\]
Using (\ref{eq:f pitchfork}) and that $\pr_1:\wt N\to N$ is a diffeomorphism, it follows that
\begin{eqnarray*}&\left.\phi_{\wt H_a}^1\right|_{\wt N}\pitchfork\left(\phi_{\wt G}^1\right)^{-1}(\wt N),\quad\textrm{i.e.,}&\\
&\wt N\pitchfork\left(\left(\phi_{\wt H_a}^1\right)^{-1}\left(\left(\phi_{\wt G}^1\right)^{-1}(\wt N)\right)=\left(\phi_{\wt G}^1\circ\phi_{\wt H_a}^1\right)^{-1}(\wt N)\right).&
\end{eqnarray*}
(The composed function on the right hand side is defined on the pre-image of $\dom\left(\phi_{\wt G}^1\right)$ under $\phi_{\wt H_a}^1$.) Since 
\begin{align*}\phi_{\wt G}^1\circ\phi_{\wt H_a}^1&=\phi_{\wt G\#\wt H_a}^1\\
&=\phi_{\wt{G\#H_a}}^1\\
&=\wt\phi_1,
\end{align*}
it follows that
\[\wt N\pitchfork\wt\phi_1^{-1}(\wt N).\]
Hence the hypotheses of the implication ``$\follows$'' of Lemma \ref{le:N phi} are satisfied. Applying that lemma, it follows that $\big(\phi=\phi_1,N,y\big)$ is nondegenerate. This proves Claim \ref{claim:phi N y}.
\end{proof}

Recall that the points $y_1,\ldots,y_\ell\in N$ are chosen, such that (\ref{eq:N sub bigcup}) holds, and that $x$ is a path as in (\ref{eq:x F}). We denote by $i$ the smallest index, such that $x(0)\in\Int K_i$. 
\begin{claim}\label{claim:x y} We have
\[x([0,1]),\,y([0,1])\sub V_i.\]
\end{claim}
(Recall that $V_i=V_{y_i}$ was chosen as in Claim \ref{claim:V y}.)
\begin{proof}[Proof of Claim \ref{claim:x y}] Let $t\in[0,1]$. By our assumption (\ref{eq:x 0 x t O}), we have $\big(x(0),x(t)\big)\in\OO$. Using the definition (\ref{eq:OO}) of $\OO$ and our choice of $i$, it follows that $x(t)\sub V_i$. This proves that $x([0,1])\sub V_i$. 

By (\ref{eq:y}) we have $y(0)=x(0)\in\Int K_i$. Since $a\in A_0$, we have that $\phi_{G\#H_a}^t|_N\in\UU$, for every $t$. Hence by Claim \ref{claim:phi wt H}, we have
\[\phi_{\wt{G\#H_a}}^t\big(x(0),x(0)\big)\in\wt U_i,\quad\forall t\in[0,1].\]
By (\ref{eq:wt psi wt U},\ref{eq:wt phi}) we have $\wt U_i\sub\textrm{domain of }\wt\psi_i\sub M\x V_i$. 
Using (\ref{eq:y}), it follows that
\[y([0,1])\sub\pr_2\big(\wt U_i\big)\sub V_i.\]
This proves Claim \ref{claim:x y}.
\end{proof}
By (\ref{eq:y}) we have
\[x(0)=y(0).\]
Using (\ref{eq:x 0 U phi x}), we have
\[\big(x(1),y(1)\big)=\big(\phi_{G\#H_a}^1\circ x(0),y(1)\big)=\phi_{\wt{G\#H_a}}^1\big(x(0),x(0)\big)\in\wt M.\]
Combining this with Claim \ref{claim:x y}, the fact that $x$ and $y$ are paths in the same leaf $F$,  and (\ref{eq:z z'}), it follows that
\[x(1)=y(1).\]
Combining this with Claim \ref{claim:x y}, Remark \ref{rmk:hol}, and surjectivity of the foliation chart $\psi_i:V_i\to\R^{2m}\x\R^k$, it follows that
\[\hol^{N,\om}_x=\hol^{N,\om}_y.\]
(This statement makes sense, since $x(i)=y(i)$, for $i=0,1$.) Since $\big(\phi,N,y\big)$ is nondegenerate, using Claim \ref{claim:phi N y}, it follows that $\big(\phi,N,x\big)$ is nondegenerate, as desired. It follows that $\phi$ is $\OO$-nondegenerate. This proves Claim \ref{claim:phi V} and completes the proof of Claim \ref{claim:phi}.
\end{pf}
\end{proof}

Since $\phi\in\VV$, it follows from Claim \ref{claim:phi} that the set
\[\big\{\phi\in\Ham(M,\om,\UU)\,\big|\,\phi\textrm{ is }\OO\textrm{-nondegenerate}\big\}\]
is dense in $\Ham(M,\om,\UU)$ in the strong $C^\infty$-topology. This proves (\ref{thm:leafwise:dense}) and completes the proof of Theorem \ref{thm:leafwise}.
\end{proof}
\begin{rmk}[method of proof of Theorem \ref{thm:leafwise}]\label{rmk:regular} The method of proof of Theorem \ref{thm:leafwise}(\ref{thm:leafwise:Fix}) refines the technique used in the proof of \cite[Theorem 1.1]{ZiLeafwise} in the following sense. Assume that $N$ is regular (i.e., ``fibering'') in the sense that there exists a manifold structure on the set $N_\om$ of isotropic leaves of $N$, such that the canonical projection $\pi_N:N\to N_\om$ is a smooth submersion. We denote by $\om_N$ the unique symplectic form on $N_\om$ that pulls back to $\iota_N^*\om$ under $\pi_N$. 
We equip the product $\hhat M:=M\x N_\om$ with the symplectic form $\hhat\om:=\om\oplus(-\om_N)$. 

In \cite{ZiLeafwise} the symplectic manifold $(\hhat M,\hhat\om)$ was used to prove a lower bound on $\big|\Fix(\phi,N)\big|$ for a regular coisotropic submanifold $N$. On the other hand, the proof of Theorem \ref{thm:leafwise} is based on the construction of a certain symplectic submanifold $\wt M\sub M\x N$ (see Lemma \ref{le:wt M}), which can be viewed as a local version of $(\hhat M,\hhat\om)$. More precisely, if $N$ is regular then $\wt M$ can be symplectically embedded into $\hhat M$ via the map
\[(x,y)\mapsto(x,N_y),\]
where $N_y$ denotes the isotropic leaf of $N$ through $y$. 
\end{rmk}
\begin{Rmk}[simplifying the proof] Suppose that we only want to show that there exist $\UU$ and $\OO$ satisfying condition (\ref{thm:leafwise:Fix}) in Theorem \ref{thm:leafwise} (but not necessarily condition (\ref{thm:leafwise:dense})). For this we do not need Claim \ref{claim:V y}. Instead we may choose an arbitrary open neighbourhood $V_y$ of $y$ in $N$ and an arbitrary foliation chart $\psi_y:V_y\to\R^{2m}\x\R^k$.

We may simplify the proof further by choosing $\OO:=N\x N$. This means that we need to show the inequality (\ref{eq:Fix phi 1 N}) only if $(\phi^1,N)$ is $N\x N$-nondegenerate, i.e., nondegenerate in the sense of \cite{ZiLeafwise}.
\end{Rmk}
\appendix
\section{Auxiliary results}
In the proof of Lemma \ref{le:wt M} we used the following.
\begin{lemma}[local embedding]\label{le:emb}Let $M$ and $M'$ be manifolds (without boundary), $K\sub M$ a compact subset, and $f:M\to M'$ a smooth map whose restriction to $K$ is injective, such that $df(x)$ is injective for every $x\in K$. Then there exists an open neighbourhood $U\sub M$ of $K$, such that $f|_U$ is a smooth embedding.
\end{lemma}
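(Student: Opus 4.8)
The plan is to prove this in two stages: first show that $f$ is a local diffeomorphism onto its image near each point of $K$ (this is immersivity plus local injectivity), and then upgrade local injectivity to injectivity on a whole neighbourhood of $K$ by a compactness/contradiction argument. For the first stage, since $df(x)$ is injective for every $x\in K$, the rank theorem gives, for each $x\in K$, an open neighbourhood $U_x\sub M$ on which $f$ is an injective immersion that is moreover a homeomorphism onto its image (an embedding); one can arrange $\overline{U_x}$ compact. This handles the ``immersion'' part of the conclusion automatically on any neighbourhood contained in $\bigcup_{x\in K}U_x$, so the only real content is global injectivity.

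For the second stage I would argue by contradiction. Suppose no neighbourhood of $K$ on which $f$ is injective exists. Fix a relatively compact open neighbourhood $W$ of $K$ with $\overline W\sub\bigcup_{x\in K}U_x$ (possible by compactness of $K$), and choose a nested sequence of open neighbourhoods $W_n\downarrow K$, e.g. $W_n=\{p\in W\mid d(p,K)<1/n\}$ for a metric $d$ inducing the topology on $\overline W$. By assumption, for each $n$ there are distinct points $p_n\neq q_n$ in $W_n$ with $f(p_n)=f(q_n)$. By compactness of $\overline W$, after passing to a subsequence, $p_n\to p$ and $q_n\to q$ with $p,q\in\overline{W_n}$ for all large $n$, hence $p,q\in K$ (since $\bigcap_n\overline{W_n}=K$), and continuity gives $f(p)=f(q)$; injectivity of $f|_K$ forces $p=q=:x_0\in K$. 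But then for large $n$ both $p_n$ and $q_n$ lie in the neighbourhood $U_{x_0}$ on which $f$ is injective, contradicting $p_n\neq q_n$, $f(p_n)=f(q_n)$. Therefore some $W_n$ is a neighbourhood of $K$ on which $f$ is injective; intersecting with $\bigcup_{x\in K}U_x$ and taking $U:=W_n\cap\bigcup_{x\in K}U_x$ yields an open $U\supseteq K$ on which $f$ is an injective immersion which is a local homeomorphism onto its image, i.e. a smooth embedding.

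The main obstacle is the passage from ``locally injective near $K$'' to ``injective on a neighbourhood of $K$'': local injectivity near each point does not by itself prevent two far-apart branches from colliding, and this is exactly where compactness of $K$ (and relative compactness of an ambient neighbourhood, to extract convergent subsequences) is essential. One should be a little careful that $f|_U$ being an embedding requires it to be a homeomorphism onto $f(U)$ with the subspace topology; this follows because $U$ is contained in a finite union of the $U_x$'s on each of which $f$ is an embedding, and $f|_U$ is injective, so $f|_U$ is a continuous open-onto-image injection — one can also simply invoke that an injective immersion from a manifold that is a union of finitely many opens on which it is an embedding, with the injectivity guaranteeing no global self-intersections, is an embedding. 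The smoothness of $f|_U$ and of its inverse on $f(U)$ is then immediate from the local statements.
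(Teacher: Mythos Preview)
Your two-stage plan is sound, and the sequential contradiction argument in stage~2 correctly produces an open neighbourhood of $K$ on which $f$ is an injective immersion. The paper argues this part differently---it builds the set $S=\{(x,y)\mid x=y\text{ or }f(x)\neq f(y)\}$, observes that $S$ is a neighbourhood of $K\times K$, and extracts $U_0$ with $U_0\times U_0\sub S$ by a direct metric argument---but the two routes are equivalent in spirit, both resting on compactness of $K$.

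There is, however, a genuine gap in your last paragraph. An injective immersion that is a finite union of opens on each of which it embeds need \emph{not} be an embedding. Take $\beta:(-\pi,\pi)\to\R^2$, $\beta(t)=(\sin 2t,\sin t)$: this is an injective immersion and one can cover $(-\pi,\pi)$ by, say, $(-\pi,-0.1)\cup(-0.2,0.2)\cup(0.1,\pi)$, on each of which $\beta$ is an embedding; yet the image is the full (compact) figure-eight, so $\beta$ is not a homeomorphism onto its image. In particular your claim that ``$f|_U$ is a continuous open-onto-image injection'' does not follow from the ingredients you list. The trouble is exactly that points near the ends $t\to\pm\pi$ accumulate on $\beta(0)$ in the image topology.

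The fix---and this is precisely what the paper does---is to shrink once more: choose an open $U$ with $K\sub U$ and $\overline U$ compact and contained in your $W_n$ (which is possible since $K$ is compact and $W_n$ is open in a manifold). Then $f|_{\overline U}$ is a continuous injection from a compact space to a Hausdorff space, hence a homeomorphism onto its image; equivalently, $f|_U$ is proper onto its image, and an injective proper immersion is an embedding. With this shrinking step added, your argument is complete.
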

In the proof of this lemma we will use the following notation. Let $(X,d)$ be a metric space and $A,B\sub X$. We denote
\[d(A,B):=\inf_{(a,b)\in A\x B}d(a,b).\]
\begin{proof}[Proof of Lemma \ref{le:emb}]\setcounter{claim}{0} We show that there exists an open neighbourhood $U_0$ of $K$ on which $f$ is injective. Since $f$ is continuous and $f|_K$ is injective, the set
\[S:=\big\{(x,y)\in M\x M\,\big|\,x=y\textrm{ or }f(x)\neq f(y)\big\}\]
is a (possibly nonopen) neighbourhood of
\[\big\{(x,y)\in K\x K\,\big|\,x\neq y\big\}\]
in $M\x M$. By the Immersion Theorem every point in $K$ admits an open neighbourhood in $M$ on which $f$ is injective. It follows that $S$ is a neighbourhood of 
\[\big\{(x,x)\,\big|\,x\in K\big\},\]
and therefore of $K\x K$.
\begin{claim}\label{claim:U 0}There exists an open neighbourhood $U_0\sub M$ of $K$ such that $U_0\x U_0\sub S$.
\end{claim}
\begin{proof}[Proof of Claim \ref{claim:U 0}]We choose a distance function $d$ on $M$ that induces the topology. We define the distance function $\wt d$ on $\wt M:=M\x M$ by
\[\wt d\big((x,y),(x',y')\big):=d(x,x')+d(y,y').\]
Since $\wt K:=K\x K$ is compact, there exists a constant $\eps>0$, such that the closed $\eps$-neighbourhood of $\wt K$,
\[\wt K_\eps:=\left\{\wt a\in\wt M\,\Big|\,\exists\wt b\in\wt K:\,\wt d(\wt a,\wt b)\leq\eps\right\}\]
is compact. The same then holds for $\wt K_\eps\wo\Int S$. Hence $\wt d$ attains its minimum on $\wt K\x\big(\wt K_\eps\wo\Int S\big)$. This minimum is positive, since if $(\wt a,\wt b)\in\wt K\x\wt M$ is such that $\wt d(\wt a,\wt b)=0$ then $\wt b=\wt a\in\wt K\sub\Int S$. It follows that 
\[\eps_0:=\wt d\left(\wt K,\wt M\wo\Int S\right)>0.\]
We define
\[U_0:=\left\{x\in M\,\Big|\,\exists y\in K:\,d(x,y)<\frac{\eps_0}2\right\}.\]
This is an open neighbourhood of $K$. Let $(x,y)\in U_0\x U_0$. Then 
\[\wt d\big(\{(x,y)\},\wt K\big)<2\cdot\frac{\eps_0}2,\]
hence $(x,y)\in\Int S$. Hence the set $U_0$ has the desired properties. This proves Claim \ref{claim:U 0}.
\end{proof}
We choose $U_0$ as in this claim. The restriction $f|_{U_0}$ is injective. 
\begin{claim}\label{claim:U 1} The set
\[U_1:=\big\{x\in M\,\big|\,df(x)\textrm{ injective}\big\}\]
is an open neighbourhood of $K$.
\end{claim}
\begin{proof}[Proof of Claim \ref{claim:U 1}] This set contains $K$. To show openness, consider first the \textbf{case $M=\R^n$, $M'=\R^{n'}$}. The set 
\[\big\{\textrm{injective linear map from }\R^n\textrm{ to }\R^{n'}\big\}\]
is open in the space of all linear maps from $\R^n$ to $\R^{n'}$. Since
\[df:\R^n\to\big\{\textrm{linear map:}\R^n\to\R^{n'}\big\}\]
is continuous, it follows that $U_1$ is open. This proves the statement in the case $M=\R^n$, $M'=\R^{n'}$. The general situation can be reduced to this case by using charts. This proves Claim \ref{claim:U 1}.
\end{proof}
We choose $U_1$ as in this claim and an open neighbourhood $U\sub M$ of $K$ whose closure is compact and contained in $U_0\cap U_1$. The restriction of $f$ to $U$ is proper onto its image. It follows that this restriction is a smooth embedding. This proves Lemma \ref{le:emb}.
\end{proof}
The next result was also used in the proof of Lemma \ref{le:wt M}. Let $V$ be a finite dimensional real vector space and $b:V\x V\to\R$ a bilinear map. Recall that $b$ is called \emph{nondegenerate} iff the map
\[V\ni v\mapsto b(v,\cdot)\in V^*\] 
is an isomorphism.
\begin{lemma}[nondegeneracy]\label{le:nondeg} Let $M$ be a manifold and $b$ a field of bilinear forms on $M$. 
Then the set
\[S:=\big\{x\in M\,\big|\,b_x\textrm{ is nondegenerate}\big\}\]
is open.
\end{lemma}
\begin{proof}[Proof of Lemma \ref{le:nondeg}] Consider the \textbf{case} $M=\R^n$. Then
\[S=\Big\{x\in\R^n\,\big|\,b_x\in\big\{\textrm{nondegenerate bilinear map: }\R^n\x\R^n\to\R\big\}\Big\}.\]
This set is open in $\R^n$, since the set
\[\big\{\textrm{nondegenerate bilinear map: }\R^n\x\R^n\to\R\big\}\]
is open in the set of all bilinear maps and the map $x\mapsto b_x$ is continuous. This proves the statement in the case $M=\R^n$. The general situation can be reduced to this case by using charts for $M$. This proves Lemma \ref{le:nondeg}.
\end{proof}
In the proof of Theorem \ref{thm:leafwise} (Claim \ref{claim:OO}) we used the following.
\begin{rmk}\label{rmk:X Y} Let $I\sub J\sub\N$ be finite subsets, for $i\in J$ let $A_i\sub B_i$ be sets, and let $i_0\in I$. Then we have 
\[X:=A_{i_0}\wo\bigcup_{j\in J\wo I}B_j\sub Y:=\bigcup_{i\in I}\left(A_i\wo\bigcup_{j\in J:\,j<i}A_j\right).\]
To see this, let $x\in X$. We define
\[i_1:=\min\big\{i\in I\,\big|\,x\in A_i\big\}.\]
For every $j\in J$ satisfying $j<i_1$, we have $x\not\in A_j$. (This follows by considering the cases $j\in I$ and $j\not\in I$ separately.) It follows that 
\[x\in A_{i_1}\wo\bigcup_{j\in J:\,j<i_1}A_j\sub Y.\]
This proves that $X\sub Y$.
\end{rmk}
In the proof of Theorem \ref{thm:leafwise} we also used the following.
\begin{lemma}[Hamiltonian flow]\label{le:compact support}Let $(M,\om)$ be a symplectic manifold (without boundary), $H\in C^\infty\big([0,1]\x M,\R\big)$, and $K\sub\bigcap_{t\in[0,1]}\dom(\phi_H^t)=\dom(\phi_H^1)$ a compact subset. Then there exists a function
\[\hhat H\in C^\infty\big([0,1]\x M,\R\big)\]
with compact support, such that
\[\phi_{\hhat H}^t=\phi_H^t\textrm{ on }K,\quad\forall t\in[0,1].\]
\end{lemma}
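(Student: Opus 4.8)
The plan is to modify $H$ only away from a neighbourhood of the flowed-out set, using a cutoff function, so that the flow is unchanged on $K$ but the new Hamiltonian has compact support. First I would consider the "flow-out" set
\[
\Om:=\big\{(t,x)\in[0,1]\x M\,\big|\,\exists\,t_0\in[t,1]\textrm{ with }\phi_H^{t-t_0}(x)\textrm{ defined and in }\phi_H^{t_0}(K)\big\},
\]
or more simply the union over $t\in[0,1]$ of the compact sets $\phi_H^t(K)$; since $\phi_H$ depends smoothly on $(t,x)$ and $K$ is compact and contained in the domain of $\phi_H^t$ for all $t$, by a standard argument the set $L:=\bigcup_{t\in[0,1]}\phi_H^t(K)$ is compact. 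I would then choose a compact neighbourhood $L'$ of $L$ and a cutoff $\chi\in C_c^\infty(M,[0,1])$ with $\chi\equiv1$ on $L'$ and $\supp\chi$ compact, and define $H'_t:=\chi\cdot H_t$, i.e. $H'(t,x):=\chi(x)H(t,x)$.

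The key point is to check $\phi_{H'}^t=\phi_H^t$ on $K$ for all $t$. For this I would argue that the trajectory $t\mapsto\phi_H^t(x_0)$, for $x_0\in K$, stays inside $L'$ (indeed inside $L$) for all $t\in[0,1]$, by definition of $L$. On the open set where $\chi\equiv1$ we have $H'=H$, hence $X_{H'_t}=X_{H_t}$ there, so the same curve $t\mapsto\phi_H^t(x_0)$ solves the ODE $\dot\gamma(t)=X_{H'_t}(\gamma(t))$ with $\gamma(0)=x_0$; since $\chi$ has compact support, $H'$ is complete, so this is the maximal integral curve and $\phi_{H'}^t(x_0)=\phi_H^t(x_0)$. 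By uniqueness of solutions of ODEs these agree as long as the $H$-trajectory stays in $\{\chi=1\}$, which it does on all of $[0,1]$; hence $\phi_{H'}^t=\phi_H^t$ on $K$ for every $t\in[0,1]$.

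The main obstacle is the verification that $L=\bigcup_{t\in[0,1]}\phi_H^t(K)$ is compact (equivalently, that the trajectories through $K$ do not escape to infinity and depend continuously on their data on all of $[0,1]\x K$). This follows from the hypothesis that $K$ lies in the domain of $\phi_H^t$ for every $t$: the set $\mathcal{D}:=\{(t,x)\,|\,\phi_H^t(x)\textrm{ is defined}\}$ is open in $[0,1]\x M$ and contains the compact set $[0,1]\x K$, and $(t,x)\mapsto\phi_H^t(x)$ is continuous on $\mathcal{D}$, so its image on the compact set $[0,1]\x K$, which equals $L$, is compact. Once $L$ is known to be compact, the choice of $\chi$ and the remaining verification are routine. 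This proves Lemma \ref{le:compact support}.
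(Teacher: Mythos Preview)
Your proposal is correct and is precisely the cut-off argument that the paper invokes (the paper's proof consists of a single sentence referring to such an argument in \cite[Lemma 35]{SZ}). You have simply written out the details the paper omits: compactness of the flowed-out set $L=\bigcup_{t\in[0,1]}\phi_H^t(K)$ via continuity of the flow on the compact $[0,1]\x K$, multiplication of $H$ by a bump function equal to $1$ near $L$, and agreement of the flows on $K$ by uniqueness of ODE solutions.
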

\begin{proof}[Proof of Lemma \ref{le:compact support}] This follows from a cut-off argument as in the proof of \cite[Lemma 35]{SZ}. 
%
\end{proof}
\section*{Acknowledgments}
I would like to thank L.~Buhovsky for making me aware that the answer to the question in Remark \ref{rmk:C0} is yes for coordinate space and for closed surfaces. Some correspondence with S.~Seyfaddini, L.~Polterovich and S.~M\"uller regarding this question is also gratefully acknowledged. Finally, I thank G.~Cavalcanti for stimulating discussions and one referee for his/ her detailed and useful comments.

\end{document}